\documentclass[reqno]{amsart}
\usepackage{subfigure}
\usepackage[margin=3.5cm]{geometry}
\usepackage{color}
\usepackage{graphicx}
\usepackage{enumerate}
\usepackage{xcolor}
\usepackage{mathtools,amsthm,amssymb, amsmath, amssymb, amsfonts, epsfig}
\usepackage{setspace, stmaryrd, verbatim, euro, enumerate}
\usepackage{bbm, tensor,comment,textcomp}
\usepackage{bm}
\usepackage{appendix}
\usepackage{multirow}

\usepackage{colortbl}

\setlength{\parindent}{0em}
\setlength{\parskip}{0.7em}

\newtheorem{theorem}{Theorem}[section]
\newtheorem{corollary}[theorem]{Corollary}
\newtheorem{lemma}[theorem]{Lemma}
\newtheorem{proposition}[theorem]{Proposition}

\numberwithin{equation}{section}

\theoremstyle{definition}
\newtheorem{definition}[theorem]{Definition}
\newtheorem{assumption}[theorem]{Assumption}

\newtheorem{remark}[theorem]{Remark}

\theoremstyle{plain}
\newtheorem{example}[theorem]{Example}

\definecolor{winered}{rgb}{0.6,0,0}
\definecolor{ocean}{rgb}{0,0.1,0.6}
\definecolor{forest}{rgb}{0,0.5,0.1}
\definecolor{fill}{rgb}{.92,1,.92}
\definecolor{sunset}{rgb}{.9,0.4,0}
\def\blue#1{\textcolor{blue}{#1}}

\usepackage[colorlinks=true, linkcolor=ocean,citecolor=winered,urlcolor=winered]{hyperref}

\newcommand{\norm}[1]{\left\lVert#1\right\rVert}
\newcommand{\norminf}[1]{\left\lVert#1\right\rVert_\infty}
\newcommand{\abs}[1]{\left\lvert#1\right\rvert}

\newcommand\cA{\mathcal{A}}
\newcommand\cB{\mathcal{B}}
\newcommand\cC{\mathcal{C}}
\newcommand\cD{\mathcal{D}}
\newcommand\cE{\mathcal{E}}
\newcommand\cF{\mathcal{F}}
\newcommand\cG{\mathcal{G}}

\newcommand\cM{\mathcal{M}}

\newcommand\cP{\mathcal{P}}

\newcommand\cW{\mathcal{W}}
\newcommand\cX{\mathcal{X}}

\newcommand{\bsigma}{\boldsymbol \sigma}
\newcommand{\bb}{\boldsymbol b}

\newcommand\bE{\mathbb{E}}

\newcommand\bN{\mathbb{N}}

\newcommand\bR{\mathbb{R}}

\newcommand{\D}{\mathrm{d}}
\newcommand{\E}{\mathrm{e}}
\newcommand{\BB}{\mathrm{B}}

\newcommand{\half}{\frac{1}{2}}

\newcommand{\ep}{\varepsilon}
\newcommand{\vphi}{\varphi}
\newcommand{\notthis}[1]{}
\newcommand{\one}{\mathbbm{1}}

\newcommand{\olambda}{\overline{\lambda}}

\newcommand{\dt}{\, \mathrm{d}t}
\newcommand{\ds}{\, \mathrm{d}s}
\newcommand{\du}{\, \mathrm{d}u}
\newcommand{\dr}{\, \mathrm{d}r}
\newcommand{\dx}{\, \mathrm{d}x}

\newcommand{\bbl}{\bm{\overline{\lambda}}}

\usepackage{charter} 
\usepackage[foot]{amsaddr} 

\usepackage[textsize=tiny]{todonotes}

\begin{document}

\title{On the large-time behaviour of affine Volterra processes}

\author{Antoine Jacquier}
\address[Antoine Jacquier]{Department of Mathematics, Imperial College London and the Alan Turing Institute, UK.}
\email{a.jacquier@imperial.ac.uk}

\author{Alexandre Pannier$^\ast$}
\address[Alexandre Pannier]{LPSM, Université Paris Cité}
\email{pannier@lpsm.paris}

\author{Konstantinos Spiliopoulos}
\address[Konstantinos Spiliopoulos]{Boston University, Department of Mathematics and Statistics\\ 111 Cummington Mall, Boston, MA 02215, USA}
\email{kspiliop@bu.edu}

\maketitle


\begin{abstract}
    We show the existence of a stationary measure for a class of multidimensional stochastic Volterra systems of affine type. These processes are in general not Markovian, a shortcoming which hinders their large-time analysis. We circumvent this issue by lifting the system to a measure-valued stochastic evolution equation introduced by Cuchiero and Teichmann~\cite{CT18}, whence we retrieve the Markov property. Leveraging on the associated generalised Feller property, we extend the Krylov-Bogoliubov theorem to this infinite-dimensional setting and thus establish an approach to the existence of invariant measures. We present concrete examples, including the rough Heston model from Mathematical Finance.
\end{abstract}



\section{Introduction}
We are interested in the large-time behaviour of multidimensional Stochastic Volterra Equations (SVEs) of affine type which take the form
\begin{equation}\label{eq:SVE}
V_t = V_0 + \int_0^t K(t-s) \bb(V_s)\ds + \int_0^t K(t-s)\bsigma(V_s)\,\D W_s, \quad t\ge0,
\end{equation}
where $\bsigma\bsigma^\top:\bR^d\to\bR^{d\times d}$ and $\bb:\bR^d\to\bR^d$ are affine. Define~$\bR_+:=[0,\infty)$; the kernel~$K\in L^2_{{\rm loc}}(\bR_+, \bR^{d\times d})$ is a diagonal matrix with each component being the Laplace transform of some signed measure on~$\bR_+$, such as the kernel~$t\mapsto\frac{t^{\alpha-1}}{\Gamma(\alpha)}\E^{-\delta t},\,\alpha\in(1/2,1],\delta>0$.

The main result of this paper is the existence of an invariant measure for the Markovian lift of~\eqref{eq:SVE} introduced by Cuchiero and Teichmann~\cite{CT18}; this entails a new notion of stationarity for~$V$. The renewed interest in SVEs stems from the emergence of a new class of stochastic volatility models which surrender the comfort of Markovianity for the sake of consistency with the data. Two examples in this direction are the rough Heston model, which arises as the scaling limit of a high-frequency model governed by Hawkes processes~\cite{EFR18}, and the rough Bergomi model which appeared in~\cite{BFG16}. The former is affine in the sense of~\eqref{eq:SVE} while the latter is not.
The term affine Volterra process was coined in~\cite{ALP17} where general existence and uniqueness are derived. We note here however that despite the large number of asymptotic results, very little is known about the ergodic behaviour of rough volatility models.

In fact, to the best of our knowledge, only three related results exist in this direction. The first one is a large deviation principle for the rescaled log-price under the rough Heston model~\cite{FGS19}; it is derived from computing the limit of its characteristic function, which is known in semi-closed form~\cite{ER19}. The second one is~\cite{GR20}, where the authors proved the existence of an invariant measure for the asset price under the assumption that the non-Markovian variance process has a stationary distribution. Upon completion of this work, we became aware of the very recent (and the third known related result to us)~\cite{FJ22}.
There, using completely different methods than ours, the authors prove the existence and characterise the properties of the invariant measure associated to a special case~\eqref{eq:SVE}, where the diffusion matrix~$\bsigma$ is taken to be diagonal (so it does not include the rough Heston model for example). While our results currently only provide conditions for existence of an invariant measure (see however the discussion in Section~\ref{S:Outlook}), our method is generally applicable to SVE models as long as one can verify that the lift is a generalised Feller process and that certain bounds hold and importantly we note that it parallels the Markovian theory. Other notable results in related directions are~\cite{EBDK22,CT19}.

When the process is Markovian, one gains access to transition semigroups, which in turn allow to study the ergodic behaviour. A multitude of tools have been developed to prove existence and uniqueness of invariant measures, as well as asymptotic stability and rates of convergence. The most prominent in the literature are based either on showing that the transition semigroup has the (strong) Feller property or on dissipativity methods, which are related to Lyapunov function techniques. We refer to~\cite{DPZ92,DPZ96} for an overview of these approaches.

A  different point of view is given by the theory of Random Dynamical Systems where Markovianity is replaced by a cocycle property for the driving noise, e.g. (fractional) Brownian motion~\cite{Arnold98,GAKN09}. Furthermore, in this context one analyses the asymptotic behaviour by taking the starting time to~$-\infty$ and looking at the system at~$t=0$, instead of starting at~$t=0$ and looking at~$t$ goes to $+\infty$.

This inspired Hairer's theory of Stochastic Dynamical Systems, which aimed at studying the large-time behaviour of an SDE driven by additive fractional noise~\cite{Hairer02}. To reconcile the Markovian ergodic theory with non-Markovian processes, his main idea was to augment the state space with all the past fractional noise, and to define a Feller semigroup on this augmented space. This led to further advances with multiplicative noise in the case~$H>1/3$~\cite{HO07,HP11}, and for the discretised version of the SDE~\cite{Varvenne19}.

The more recent literature on large-time behaviour of fractional processes has flourished under the umbrella of rough paths theory, in particular regarding multiscale systems~\cite{CPKMZ19,GL20,GLS21,LS20,PIX20b,PIX20a}. Their asymptotic properties were also investigated thanks to Malliavin calculus~\cite{BGS19} and the stochastic sewing lemma~\cite{HL20}. As usual with fractional stochastic integrals however, these frameworks do not accommodate for the highly irregular paths
(Hurst exponent in $(0,1/4)$) found in rough volatility models.

The idea of~\cite{Hairer05} with augmenting the state space to recover Markovianity made its way to rough volatility modelling, albeit with a mild adaptation. Motivated by hedging applications in the rough Heston model, El Euch and Rosenbaum~\cite{ER18} showed that since the variance process is non-Markovian one needs to include the whole forward variance curve; this suggests to consider the system~$(S_t, (\bE[V_{s+t}|\cF_t])_{s\ge0})$.
The forward variance even satisfies a Stochastic Partial Differential Equation (SPDE)~\cite{JE18a}, which allowed to further characterise the Markovian structure of the model.
A similar \textcolor{black}{stochastic evolution equation} lift for another singular SVE was in fact derived in an earlier work~\cite{MS15}. The analysis of the rough Heston model is facilitated by its affine structure, unlike the rough Bergomi model, its main competitor. Yet, the forward variance curve of the latter also satisfies a \textcolor{black}{stochastic evolution equation} as it belongs to the realm of polynomial processes~\cite{CS21}. The most conducive \textcolor{black}{stochastic evolution equation} lift for our purposes, though, was introduced by Cuchiero and Teichmann in~\cite{CT18}. Unlike the previous approaches, they argue it is more sensible to start from the Markovian lift before solving the SVE.
More precisely, they consider the one-dimensional \textcolor{black}{measure-valued stochastic evolution equation}
\begin{equation}\label{5eq:lift}
\D \lambda_t(\D x) = -x \lambda_t(\D x) \dt + \nu(\D x) \D X_t,
\end{equation}
where~$\lambda_0$ and~$\nu$ are signed measures on $[0,+\infty]$ \[
X_t = -\beta \int_0^t \langle 1, \lambda_s\rangle \ds + \sigma\int_0^t  \sqrt{\langle 1, \lambda_s\rangle} \D W_s,
\]
where $\langle y, \lambda \rangle:=\int_0^\infty y(x)\,\lambda(\D x)$ for all~$y\in \cC_b(\bR_+)$.
In fact, one should interpret the \textcolor{black}{stochastic evolution equation}~\eqref{5eq:lift} in the mild sense such that
\begin{align*}
\langle y,\lambda_t \rangle = \int_0^\infty \E^{-tx} y(x) \lambda_0(\D x) + \int_0^\infty \left(\int_0^t \E^{-(t-s)x} y(x) \,\D X_s\right)\,\nu(\D x).
\end{align*}
Applying the stochastic Fubini theorem, the total mass~$\langle 1, \lambda \rangle$ thus solves the affine SVE~\eqref{eq:SVE} in dimension one, with~$K(t)=\int_0^\infty \E^{-tx}\,\nu(\D x)$ and $\lambda_0$ being the Dirac mass at zero multiplied by~$V_0$.

This setting is not limited to the univariate case and, recalling our initial motivation, we observe that the rough Heston model implies the lift
\begin{align*}
\left\{
\begin{array}{rl}
    \D \lambda_t^1(\D x) &=
    \displaystyle -x \lambda_t^1(\D x) \dt + \delta_0(\D x) \left(-\half\langle 1, \lambda_t^2\rangle \dt + \sqrt{\langle 1, \lambda_t^2\rangle} \D W_t^1 \right), \\
    \D \lambda_t^2(\D x) &=
    \displaystyle -x \lambda_t^2(\D x) \dt + \nu(\D x) \left(\beta(\theta-\langle 1, \lambda_t^2\rangle) \dt + \sigma \sqrt{\langle 1, \lambda_t^2\rangle} \D W_t^2 \right),
\end{array}
\right.
\end{align*}
where~$W^1$ and~$W^2$ are correlated Brownian motions.
In this framework,
$\langle 1, \lambda_t^1\rangle$ represents the log asset price and
$\langle 1, \lambda_t^2\rangle$ the instantaneous variance.

The added value of this approach lies in the generalised Feller property, a notion introduced in~\cite{DT10}, which is satisfied by the solution to~\eqref{5eq:lift} (albeit with~$\theta=0$) and opens the gates to many parallels with Markovian ergodic theory. Indeed, it is an extension of the standard Feller property to spaces that are not locally compact. To the best of our knowledge, the present paper is the first to study the large-time behaviour
of affine Volterra processes through generalised Feller processes. Friesen and Karbach, in a paper posted at the same time as ours~\cite{friesen2024stationary}, exploit generalised Feller processes to study the stationarity of a class of Hilbert-valued models.
Interestingly enough, in~\cite{CT18}, the authors also prove weak existence and uniqueness of the mild solution of~\eqref{5eq:lift} by computing its Laplace transform
\[
\bE[\exp(\langle y_0,\lambda_t \rangle)] = \exp(\langle y_t, \lambda_0\rangle),
\]
where~$y$ is the unique solution of a non-linear PDE.

The ergodic theory of space-time SPDEs is classical by now, with the monographs~\cite{DPZ92,DPZ96}, and in particular the vast literature on the 2D stochastic Navier-Stokes equation which was the stage of several profound advances such as the asymptotic coupling technique and the asymptotic Feller property~\cite{FM95,Hairer05,HM06,WMS01,WM01}. More recent results include~\cite{BKS20,CKNP19,KKMS20}.

As one can expect, the literature on \textcolor{black}{measure-valued stochastic evolution equations} is less developed. However, there exist connections with measure-valued branching processes (aka superprocesses) which are also Markovian processes characterised by their Laplace transform, only the PDE satisfied by~$y$ takes a different form~\cite{Li11}. We note that the ergodic behaviour of superprocesses has been studied extensively thanks to their Laplace transform~\cite{CRY17,Etheridge93,Frisen19,Iscoe86,KLM19}, which is a hopeful message for us. Furthermore, the analogy with our \textcolor{black}{stochastic evolution equation} lift does not stop there: the density field of some of these superprocesses also satisfies an \textcolor{black}{stochastic evolution equation} with an affine structure~\cite{KS88,LWX04}, and in the continuous-state case branching processes share properties with affine SDEs~\cite{KLM12}. However, superprocesses take values in spaces of non-negative measures, which can be locally compact, in which case it makes sense to use the standard Feller property.

The rest of the paper is organised as follows. In Section~\ref{S:MainResults} we discuss the mathematical framework, the generalised Feller property, and present our main results,  Proposition~\ref{prop:existencecondition} and Theorem~\ref{thm:main}. In Section~\ref{sec:condition} we introduce a condition for existence of an invariant measure in an abstract setting that then leads to the proof of Proposition~\ref{prop:existencecondition}. In Section~\ref{S:MultiD_SPDE}, we consider the multidimensional \textcolor{black}{stochastic evolution equation}, prove that there is a unique solution to the multidimensional lift that is a generalised Feller process and further discuss its properties in Proposition~\ref{prop:GFP}. Section~\ref{sec:bound} concludes the proof of Theorem~\ref{thm:main} by showing that under Assumption~\ref{assu:main}, the bound in Proposition~\ref{prop:existencecondition} holds. Section~\ref{S:Outlook} discusses future directions.

\section{Framework and main results}\label{S:MainResults}

We recall some of the notations introduced in~\cite{CT18}.
Let~$\bm X$ be a completely regular Hausdorff topological space.
We say that $\varrho\colon \bm X\to (0,\infty)$ is an admissible weight function if the sets~$K_R:= \{x\in \bm X: \varrho(x)\le R\}$ are compact for all~$R>0$~\cite[Definition 2.1]{CT18}. The supremum norm on~$\bR$ is denoted~$\norminf{\cdot}$. The vector space
\[
\mathrm{B}^\varrho(\bm X) := \left\{ f\colon \bm X\to \bR: \sup_{x\in \bm X}\varrho(x)^{-1} \norminf{f(x)} <\infty \right\},
\]
equipped with the norm
\begin{equation}\label{eq:rhonorm}
\norm{f}_\varrho:= \sup_{x\in \bm X} \varrho(x)^{-1}\norminf{f(x)},
\end{equation}
is a Banach space, and~$\cC_b(\bm X) \subset \mathrm{B}^\varrho(\bm X)$. Moreover, the space~$\cB^\varrho (\bm X)$ is defined as the closure of~$\cC_b(\bm X)$ in~$\mathrm{B}^\varrho(\bm X)$~\cite[Definition 2.3]{CT18}, and is also a Banach space when equipped with the norm~\eqref{eq:rhonorm}.
Generalised Feller semigroups are the analogue of standard Feller semigroups on the space of
continuous functions vanishing at infinity on locally compact spaces. They are bounded, positive, linear, strongly continuous operators.
\begin{definition}[Definition 2.5 of~\cite{CT18}]
A family of bounded linear operator~$P_t:\cB^\varrho(\bm X)\to\cB^\varrho(\bm X)$ for~$t\ge0$ is called generalised Feller semigroup if
\begin{enumerate}[(i)]
    \item $P_0=I$, the identity on~$\cB^\varrho(\bm X)$,
    \item $P_{t+s}= P_t P_s$, for all~$t,s\ge0$,
    \item For all~$f\in\cB^\varrho(\bm X)$ and~$x\in\bm X$, $\lim_{t\downarrow0} P_t f(x) = f(x)$,
    \item There exist~$C>0$ and~$\ep>0$ such that for all~$t\in[0,\ep],$ $\norm{P_t}_{L(\cB^\varrho(\bm X))}\le C$,
    \item $P_t$ is positive for all~$t\ge0$, that is, for any $f\in\cB^\varrho(\bm X)$ such that $f\ge0$, then $P_t f\ge0$.
\end{enumerate}
\end{definition}
Theorems 2.11 and 2.13 of  \cite{CT18} ensure that each generalised Feller semigroup gives rise to an associated Markov process and such process has a version with c\`agl\`ad paths.

Let~$\cP(\bm X)$ be the space of probability measures on~$\bm X$.
\begin{definition}
For all~$t\ge0$, we define~$P^\ast_t$ as the adjoint of~$P_t$, that is, for all~$\vphi\in\cB^\varrho(\bm X)$ and~$\mu\in\cP(\bm X)$, they satisfy
\[
\int_{\bm X} P_t \vphi(x) \mu(\D x) =: \langle P_t \vphi, \mu\rangle = \langle \vphi, P^\ast_t \mu\rangle.
\]
\end{definition}
We will call $(\lambda_t)_{t\ge0}$ the process associated to the semigroup~$(P_t)_{t\ge0}$ (and reciprocally) if, for all $t\ge0$, $P_t^\star \gamma$ is the law of $\lambda_t$ whenever $\gamma$ is the law of $\lambda_0$. From the display above, one deduces that in that case~$P_t\vphi(\lambda_0)=\bE_\gamma[\vphi(\lambda_t)]$.
\begin{definition}
We say that~$\mu$ is an invariant measure if $\langle P_t \vphi, \mu\rangle=\langle \vphi, \mu\rangle$ for all~$t\ge0$ and for all~$\vphi\in\cB^\varrho(\bm X)$.
\end{definition}

\subsection{Main results}
We start with a condition for existence in a general state-space $Y^\star$, defined as the dual of a Banach space~$Y$ and equipped with its weak-$\star$-topology. We also define the strong norm~$\norm{\lambda}_{Y^\star} := \sup_{y\in Y,\,\norm{y}\le1}\langle y,\lambda \rangle$.
\begin{proposition}\label{prop:existencecondition}
If~$(\lambda_t)_{t\ge0}$ is a generalised Feller process taking values in $Y^\star$ and  \vspace{-5pt}
\begin{equation}\label{eq:conditionnorm}
    \sup_{t\ge 0} \bE[\norm{\lambda_t}_{Y^\star}] <\infty,
    \vspace{-5pt}
\end{equation}
then it has an invariant measure.
\end{proposition}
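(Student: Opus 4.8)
The plan is to run a Krylov--Bogoliubov argument, adapted to the weighted, weak-$\star$ setting dictated by the generalised Feller structure. Let $\gamma$ be the law of $\lambda_0$, so that $P_t^\ast\gamma$ is the law of $\lambda_t$, and define the time-averaged probability measures on $Y^\star$ by
\[
\mu_T := \frac{1}{T}\int_0^T P_t^\ast\gamma\dt, \qquad T>0,
\]
which I read through the pairing $\langle\vphi,\mu_T\rangle = \frac1T\int_0^T\langle P_t\vphi,\gamma\rangle\dt = \frac1T\int_0^T\bE[\vphi(\lambda_t)]\dt$ for $\vphi\in\cB^\varrho(Y^\star)$. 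The natural admissible weight here is $\varrho := 1+\norm{\cdot}_{Y^\star}$, and the two tasks are: first, to show that $(\mu_T)_{T>0}$ is relatively compact in a topology fine enough to test against all of $\cB^\varrho(Y^\star)$; second, to identify any limit point as invariant.

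For the compactness I would exploit that, on a dual space, the sublevel sets $K_R=\{\lambda: \norm{\lambda}_{Y^\star}\le R\}$ are weak-$\star$ compact by the Banach--Alaoglu theorem, so $\varrho$ is indeed admissible. Setting $M:=\sup_{t\ge0}\bE[\norm{\lambda_t}_{Y^\star}]<\infty$ from~\eqref{eq:conditionnorm}, Markov's inequality gives, uniformly in $T$,
\[
\mu_T(K_R^c) = \frac1T\int_0^T\bP\big(\norm{\lambda_t}_{Y^\star}>R\big)\dt \le \frac{M}{R},
\]
so $(\mu_T)$ is tight, and moreover $\langle\varrho,\mu_T\rangle\le 1+M$ for every $T$. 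A Prokhorov-type argument on $Y^\star$ (restricting to the compact metrisable sets $K_R$, or invoking the abstract compactness criterion established for this setting) then yields a sequence $T_n\to\infty$ and a probability measure $\mu$ with $\langle\vphi,\mu_{T_n}\rangle\to\langle\vphi,\mu\rangle$ for all $\vphi\in\cC_b(Y^\star)$. Since $\norm{\cdot}_{Y^\star}$ is weak-$\star$ lower semicontinuous, Fatou's lemma gives $\langle\varrho,\mu\rangle\le 1+M$, and I can upgrade the convergence to all of $\cB^\varrho(Y^\star)$: as $\cC_b$ is dense in $\cB^\varrho$, approximating $\vphi\in\cB^\varrho$ by $\vphi_k\in\cC_b$ and bounding $|\langle\vphi-\vphi_k,\mu_{T_n}\rangle|\le\norm{\vphi-\vphi_k}_\varrho(1+M)$ (and likewise against $\mu$) closes the gap by a $3\ep$-argument.

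It then remains to verify invariance. Fixing $s\ge0$ and $\vphi\in\cB^\varrho(Y^\star)$, the semigroup property $P_{t+s}=P_tP_s$ gives, with $g(t):=\langle P_t\vphi,\gamma\rangle=\bE[\vphi(\lambda_t)]$,
\[
\langle P_s\vphi,\mu_T\rangle - \langle\vphi,\mu_T\rangle = \frac1T\int_0^T\big(g(t+s)-g(t)\big)\dt = \frac1T\left(\int_T^{T+s} - \int_0^s\right)g(t)\dt.
\]
The integrand is uniformly bounded, since $|g(t)|\le\norm{\vphi}_\varrho\,\bE[\varrho(\lambda_t)]\le\norm{\vphi}_\varrho(1+M)$, so the right-hand side is at most $2s\norm{\vphi}_\varrho(1+M)/T\to0$ as $T\to\infty$. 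Passing to the limit along $T_n$ and using that $P_s\vphi\in\cB^\varrho(Y^\star)$ together with the extended convergence, I obtain $\langle P_s\vphi,\mu\rangle=\langle\vphi,\mu\rangle$ for every $s\ge0$ and every $\vphi\in\cB^\varrho(Y^\star)$, which is precisely the invariance of $\mu$.

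The hard part will be the compactness step rather than the invariance identity. In finite dimensions tightness plus Prokhorov is routine, but here the weak-$\star$ topology on $Y^\star$ need not be metrisable globally, so one must either localise to the compact sets $K_R$ or develop the abstract existence criterion that does this bookkeeping. A closely related delicate point is that $\cB^\varrho$ contains unbounded test functions, so convergence against $\cC_b$ alone does not suffice; it is exactly the uniform bound~\eqref{eq:conditionnorm} that plays the double role of producing tightness via Markov's inequality and of controlling $\sup_n\langle\varrho,\mu_{T_n}\rangle$, thereby upgrading the convergence to the weighted space in which invariance must be checked.
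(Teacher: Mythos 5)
Your proof is correct and follows the same Krylov--Bogoliubov skeleton as the paper: Ces\`aro averages $\mu_T$, tightness extracted from~\eqref{eq:conditionnorm}, the shift identity $\langle P_s\vphi,\mu_T\rangle-\langle\vphi,\mu_T\rangle=\frac1T\bigl(\int_T^{T+s}-\int_0^s\bigr)g(t)\dt$, and the key observation that the uniform moment bound does double duty (tightness plus control of unbounded test functions) --- which is exactly why the paper's Lemma~\ref{lemma:Krylov} needs hypothesis~\eqref{eq:AbstractBound} while the classical theorem does not. Where you genuinely diverge is in the limit-passage step. The paper's Lemma~\ref{lemma:Krylov} restricts test functions to the compact sublevel sets $K_R$, carries explicit remainder terms $\ep(R)$ supported on $\bm X\setminus K_R$, and kills them by a Fatou-type lemma for measures (\cite[Theorem~A.3.12]{DE97}) combined with the property $\lim_{R\uparrow\infty}\sup_{\bm X\setminus K_R}\abs{f}/\varrho=0$ for $f\in\cB^\varrho$ from \cite[Equation~(2.3)]{CT18}; tightness itself is obtained via the tightness-function criterion of \cite[Lemma~2.9]{BD19}. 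You instead get tightness from Markov's inequality directly (more elementary) and upgrade narrow convergence against $\cC_b$ to convergence in duality with all of $\cB^\varrho$ by a $3\ep$ density argument, using that $\cC_b$ is $\norm{\cdot}_\varrho$-dense in $\cB^\varrho$ by definition, that $\sup_n\langle\varrho,\mu_{T_n}\rangle\le 1+M$, and that $\langle\varrho,\mu\rangle\le 1+M$ by lower semicontinuity of the norm. This is cleaner and dispenses with the $\ep(R)$ bookkeeping; note that the lower semicontinuity you invoke is automatic for any admissible weight (sublevel sets are compact, hence closed), but the Fatou--Portmanteau inequality for an unbounded lower semicontinuous function under narrow convergence on the non-metrisable space $Y^\star$ is precisely the technical input the paper packages into its Fatou-type lemma, so that step deserves to be made explicit rather than cited as ``Fatou's lemma''. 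What the paper's formulation buys in exchange is an abstract, reusable lemma valid for any generalised Feller semigroup on any completely regular Hausdorff space, independent of the dual-space structure (Banach--Alaoglu) your compactness argument relies on; the paper's proof also checks a compatibility point you implicitly assume, namely that the weight $\varrho=1+\norm{\cdot}_{Y^\star}$ is admissible for the semigroup constructed in \cite{CT18} with weight $1+\norm{\cdot}_{Y^\star}^2$, via the transfer of the estimate $\bE[\varrho(\lambda_t)]\le C\varrho(\lambda_0)$.
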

\begin{remark}
Equation~\eqref{eq:conditionnorm} corresponds to $\sup_{t\ge 0} \bE[\varrho(\lambda_t)] <\infty$ for the choice of weight function~$\varrho(\lambda):=1+\norm{\lambda}_{Y^\star}$.
\end{remark}

Our goal is to apply this to $Y^\star$-valued \textcolor{black}{stochastic evolution equation}s which we introduce now.
\textcolor{black}{Let~$d\ge1$, we consider the Banach space $Y:= \cC_b(\overline{\bR}_+,\bR^d)$, where~$\overline{\bR}_+:=\bR\cup{+\infty}$ (compactifying $\bR_+$ makes $\cC_b(\overline{\bR}_+,\bR^d)$ separable) and its dual~$Y^\star=\cM(\overline{\bR}_+,\bR^{d})$ is the space of signed measured, as explained in~\cite[Chapter IV]{DufordSchwartz}.}
We then have~$\langle y,\lambda\rangle := \sum_{i=1}^d \langle y^i ,\lambda^i\rangle_i:= \sum_{i=1}^d  \int_{0}^\infty y^i(x) \lambda^i(\D x)$ for all~$y\in Y$, $\lambda\in Y^\star$.
Moreover, the weight function is~$\varrho(\lambda) := 1+ \norm{\lambda}_{Y^\star}$, the set~$\bm X$ introduced in \cite[Section~1.3]{CT18}
is a subset of~$Y^\star$ and the space~$\cB^\varrho(\bm X)$ is defined in the same way.

We consider the $Y^\star$-valued multidimensional \textcolor{black}{measure-valued stochastic evolution equation}
\begin{equation}\label{eq:multiSPDE}
\D \lambda_t(\D x) = -x \lambda_t(\D x) \dt + \nu(\D x) \,\D X_t,
\end{equation}
where, denoting $\bbl_s := \langle 1,\lambda_s \rangle$, ~$X$ is the~$\bR^d$-semimartingale:
\begin{equation}\label{eq:XProcess}
    X_t := \int_0^t  b(\bbl_s) \ds + \int_0^t\sigma(\bbl_s) \D W_s,
\end{equation}
with $\bbl:=(\olambda^1,\cdots,\olambda^d)$,
~$\lambda_t(\D x)\in\bR^d$, $\nu(\D x)\in\bR^{d\times d}$, $W$ is an $m$-dimensional Brownian motion,
and $b\in\bR^d$ and $\sigma\in\bR^{d\times m}$ are such that
\begin{equation}\label{eq:multicoefs}
b_i(x)=-\sum_{k=1}^d \beta_{ik} x_k;
\quad \sigma_{ij}(x)=\sum_{k=1}^d \sigma_{ijk} \sqrt{x_k}+c_{ijk}, \quad\text{for all } x = (x^1, \cdots, x^d)\in\bR^d.
\end{equation}
Let us define~$\tilde{d}\in\llbracket 0,d\rrbracket$ and the state space~$D:=\bR_+^{\tilde{d}}\times \bR^{d-\tilde{d}}$.
\begin{assumption}\label{assu:main}
The following conditions hold:
\begin{enumerate}[a)]
    \item $\lambda_0(\D x) = V_0 \delta_0(\D x)$ for some~$V_0\in D$;
    \item for all $i\in \llbracket 1,d\rrbracket$, $\nu^i(\D x)= \frac{1}{\Gamma(\alpha(i))\Gamma(1-\alpha(i))}(x-\delta)^{-\alpha(i)} \one_{x>\delta}\dx$, where $\alpha(i)\in(\half,1]$ and $\delta>0$;
    \item $c_{ijk}=0$ for all $i,j,k$;
    \item if~$i\le\tilde{d}$ and $i\neq k$, then $\sigma_{ijk}=\beta_{ik}=0$ for all $j$;
    \item if $i>\tilde{d}$ and $k>\tilde{d}$, then $\sigma_{ijk}=\beta_{ik}=0$ for all $j$.
    \item \textcolor{black}{$\beta_{ii}>0$ for $i=1,\cdots,\tilde{d}$  and $\beta_{jk}\leq 0$ for $j,k=1,\cdots,\tilde{d}$ and $j\neq k$.}
\end{enumerate}
\end{assumption}
Our main result is the following theorem.
\begin{theorem}\label{thm:main}
If Assumption~\ref{assu:main} holds,
then the solution to~\eqref{eq:multiSPDE} has an invariant measure.
\end{theorem}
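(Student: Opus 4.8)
The plan is to verify the hypothesis of Proposition~\ref{prop:existencecondition}, namely the uniform-in-time moment bound $\sup_{t\ge0}\bE[\norm{\lambda_t}_{Y^\star}]<\infty$, since that proposition then delivers the invariant measure for free. This reduces the whole theorem to two tasks: first, to establish that the solution of~\eqref{eq:multiSPDE} under Assumption~\ref{assu:main} is a well-defined generalised Feller process taking values in $Y^\star=\cM(\overline{\bR}_+,\bR^d)$, and second, to control its strong norm uniformly in time. The first task is exactly what Proposition~\ref{prop:GFP} is advertised to provide (existence, uniqueness, and the generalised Feller property of the multidimensional lift), so I would cite it and concentrate on the moment bound.

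For the moment bound, the key observation is that the total-mass process $\bbl_t=\langle 1,\lambda_t\rangle$ solves the finite-dimensional affine SVE~\eqref{eq:SVE} with diagonal kernel $K^i(t)=\int_0^\infty \E^{-tx}\nu^i(\D x)$; by Assumption~\ref{assu:main}(b) this is the shifted fractional kernel $K^i(t)=\frac{t^{\alpha(i)-1}}{\Gamma(\alpha(i))}\E^{-\delta t}$, which is integrable on $\bR_+$ because of the exponential damping $\delta>0$. I would first establish a uniform bound on $\bE[\bbl_t]$ (equivalently $\bE[\norm{\lambda_t^i}_i]$ on the nonnegative coordinates $i\le\tilde d$, where the total mass is nonnegative so its expectation coincides with the mass norm). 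Taking expectations in the mild formulation kills the martingale part, leaving $\bE[\bbl_t^i]=V_0^i+\int_0^t K^i(t-s)\,b_i(\bE[\bbl_s])\ds$; the drift structure imposed by Assumption~\ref{assu:main}(d)--(f) is a mean-reverting linear system (the matrix $\beta$ restricted to the nonnegative block has positive diagonal $\beta_{ii}>0$ and the off-diagonal couplings are sign-controlled), so the deterministic linear Volterra integral equation for $m^i_t:=\bE[\bbl_t^i]$ has a resolvent that decays, giving $\sup_t m^i_t<\infty$. The sign conditions (c)--(f) are precisely what decouple the sign-indefinite coordinates from feeding instability back into the nonnegative ones.

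The strong norm $\norm{\lambda_t}_{Y^\star}=\sup_{\norm{y}\le1}\langle y,\lambda_t\rangle$ is the total-variation norm, which for the nonnegative coordinates equals the total mass $\bbl_t^i$, but for the sign-indefinite coordinates $i>\tilde d$ it may exceed $\abs{\bbl_t^i}$. To handle those, I would go back to the mild representation $\langle y,\lambda_t^i\rangle=\int \E^{-tx}y^i(x)\lambda_0^i(\D x)+\int\big(\int_0^t\E^{-(t-s)x}y^i(x)\,\D X^i_s\big)\nu^i(\D x)$ and bound its supremum over $\norm{y}\le1$ by controlling the pushforward measures, estimating $\bE\big[\int_0^\infty\abs{\int_0^t\E^{-(t-s)x}\,\D X^i_s}\nu^i(\D x)\big]$ via the Burkholder--Davis--Gundy inequality on the stochastic integral and Tonelli/Fubini to interchange the $x$-integration; the diffusion coefficient $\sigma(\bbl_s)$ grows like $\sqrt{\bbl_s}$ by Assumption~\ref{assu:main}(b)--(c), so its second moment is controlled once the first moments $m^i_s$ are bounded, and the exponential factor $\E^{-\delta t}$ inside the kernel plus the $\E^{-(t-s)x}$ smoothing furnish the uniform-in-$t$ integrability. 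Combining the nonnegative-coordinate mass bound with this variation bound on the remaining coordinates yields $\sup_t\bE[\norm{\lambda_t}_{Y^\star}]<\infty$.

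The main obstacle I anticipate is the second step for the sign-indefinite coordinates: the total-variation norm is genuinely larger than the absolute total mass, so a bound on $\bE[\abs{\bbl_t^i}]$ alone is insufficient and one must estimate the full family of pushforward measures $x\mapsto\int_0^t\E^{-(t-s)x}\,\D X^i_s$ uniformly in time, carefully exploiting the interplay between the $\E^{-\delta t}$ decay in $\nu^i$ and the singularity $(x-\delta)^{-\alpha(i)}$ near $x=\delta$ to make the double integral over $x$ and $s$ converge with a time-independent constant. Verifying that the singular kernel together with the square-root diffusion growth still gives a finite, $t$-uniform BDG estimate—rather than one that blows up as $t\to\infty$—is the delicate point, and it is exactly where the damping $\delta>0$ and the regime $\alpha(i)\in(\tfrac12,1]$ are essential.
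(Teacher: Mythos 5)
Your proposal follows essentially the same route as the paper: it reduces the theorem to Proposition~\ref{prop:GFP} (generalised Feller property of the lift) plus Proposition~\ref{prop:existencecondition}, and then verifies the uniform moment bound exactly as the paper does via Lemma~\ref{lemma:conditiondX} and Proposition~\ref{prop:bound} --- reduction of the total-variation norm to the integral condition on $\int_0^t\E^{-x(t-s)}\,\D X^i_s$, a uniform first-moment bound for the autonomous square-root coordinates (where the paper uses the explicit Mittag-Leffler formula~\eqref{eq:ExpectationV} and you invoke the equivalent resolvent decay), and an It\^o-isometry/BDG estimate whose uniform-in-$t$ finiteness rests precisely on $\delta>0$ and $\alpha(i)\in(\half,1]$. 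The one detail you defer as your ``delicate point'' --- integrating the square-root bound against the infinite-mass measure $\nu^i$ --- is resolved in the paper by splitting the $x$-integral at $1$ and using the weighted measure $\widehat\nu^i(\D x)=x^{-\half}\nu^i(\D x)$, which is exactly the mechanism your plan needs.
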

\begin{remark}
Assumption~\ref{assu:main} has specific structural restrictions on the coefficients $\beta,\sigma$ under which Theorem~\ref{thm:main} holds. We note here though that these are only sufficient conditions and in fact, intermediate results of independent interest, such as Proposition~\ref{prop:GFP} and Lemma~\ref{lemma:conditiondX}, do not require these conditions. \textcolor{black}{ We also note that even though our main result holds with $c_{ijk}=0$ for all $i,j,k$ in \eqref{eq:multicoefs}, certain results in the paper are proven with non-zero $c_{ijk}$, see Section \ref{S:MultiD_SPDE}.}
\end{remark}

Condition d) states that for $i\le \tilde{d}$ the total mass~$\olambda^i$ of the $i$th component is an autonomous square-root process. The last condition entails that for $i>\tilde{d}$ the $i$th component is explicitly given in terms of the autonomous square-root processes.
In the mild sense, \eqref{eq:multiSPDE} reads
\begin{equation}\label{eq:mildSPDE}
    \langle y, \lambda_t\rangle = \langle \E^{-t\cdot}y, \lambda_0 \rangle + \int_0^\infty y(x) \left(\int_0^t \E^{-x(t-s)}\,\D X_s\right) \nu(\D x),
\end{equation}
for any~$y\in Y$ and $\lambda_0\in Y^\star$, and for all~$i\in\llbracket 1,d\rrbracket$,
\[
\langle y^i, \lambda_t^i\rangle = \langle \E^{-t\cdot}y^i, \lambda_0^i \rangle + \int_0^\infty y^i(x) \left(\int_0^t \E^{-x(t-s)}\,\D X^i_s\right) \nu^i(\D x).
\]
Assumption~\ref{assu:main} then implies
\begin{alignat*}{2}
\D X^i_t &=  -\beta_{ii}\olambda^i_t\dt +  \sqrt{\olambda^i_t} \sum_{j=1}^m \sigma_{iji}\D W^j_t, \qquad\qquad\qquad && \text{if  } i\in\llbracket 1,\tilde{d}\rrbracket,\\
\D X^i_t &= -\sum_{k=1}^{\tilde{d}} \beta_{ik} \olambda^k_t \dt + \sum_{j=1}^m \sum_{k=1}^{\tilde{d}} \sigma_{ijk} \sqrt{\olambda_t^k} \,\D W^j_t, && \text{if  } i\in\llbracket\tilde{d}+1,d\rrbracket.
\end{alignat*}

The representation~\eqref{eq:mildSPDE} allows to derive the equation satisfied by~$\olambda$ in certain cases of interest.
\begin{example}\label{examples} \
\begin{itemize}
    \item The one-dimensional Volterra square-root process, appearing as the variance in the rough Heston model~\cite{EFR18}
    (without long-term mean) satisfies these conditions with~$\tilde{d}=d=1$ and~$\alpha(i)=\alpha\in(\half,1)$:
    \[
    \olambda_t = V_0 + \int_0^t (t-s)^{\alpha-1}\left(-\beta \olambda_s\right) \ds + \int_0^t (t-s)^{\alpha-1}\sigma \sqrt{\olambda_s}\,\D W_s .
    \]
    Its lift is the one-dimensional \textcolor{black}{stochastic evolution equation} introduced in~\cite{CT18}.
    \item The two-dimensional rough Heston model (without long-term mean)
    is also covered where~$\tilde{d}=1$, $d=m=2$, $\alpha(1)=\alpha\in(\half,1)$ and $\alpha(2)=1$:
    \begin{equation*}
        \left\{
        \begin{array}{rl}
        \olambda_t^{1} &=\displaystyle V_0^1 + \int_0^t (t-s)^{\alpha-1}\left(-\beta \olambda_s^{1}\right) \ds + \int_0^t (t-s)^{\alpha-1}\sigma \sqrt{\olambda_s^{1}}\,\D W_s^{1} \\
        \olambda_t^{2} &=\displaystyle -\half\int_0^t \olambda_s^{1}\ds + \int_0^t \rho \sqrt{\olambda_s^{1}}\,\D W^{1}_s + \int_0^t \bar\rho  \sqrt{\olambda_s^{1}}\,\D W^{2}_s.
        \end{array}
        \right.
    \end{equation*}
    More precisely, the coefficients are $\sigma_{ij1}=0$ for all~$i,j=1,2$ and
\[
\bm{\beta} = \begin{pmatrix}0& 1/2 \\ 0& \beta \end{pmatrix}; \quad
\sigma_{\cdot\cdot2} = \begin{pmatrix} \overline\rho & \rho  \\ 0 & \sigma \end{pmatrix}.
\]
    \item One can also consider extensions of the previous example with $\alpha(2)<1$, and higher dimensional systems where $\tilde{d}>1$ and the square-root processes feed back into the dynamics of~$\olambda_t^d$, i.e.~$\beta_{dk}>0$ for all~$k\le \tilde{d}$.
\end{itemize}
\end{example}

\begin{proof}[Proof of Theorem~\ref{thm:main}]
This is a combination of Propositions~\ref{prop:existencecondition},~\ref{prop:GFP} and~\ref{prop:bound}.
More precisely, Proposition~\ref{prop:existencecondition} states that, for a generalised Feller process~$(\lambda_t)_{t\ge0}$, the bound~\eqref{eq:conditionnorm} is a sufficient condition for the existence of an invariant measure. Proposition~\ref{prop:GFP} shows that there exists a unique solution~$(\lambda_t)_{t\ge0}$ to~\eqref{5eq:lift} and that it is indeed a generalised Feller process. Finally, Proposition~\ref{prop:bound} ensures the condition holds.
\end{proof}
Theorem~\ref{thm:main} delivers a new notion of stationarity for the Volterra process~$(\olambda_t)_{t\ge0}$. Straightforward computations show that, for any $s<t$,
\begin{equation}\label{eq:SVE1}
\olambda_t = \int_0^\infty \E^{-x(t-s)} \lambda_s(\D x) + \int_s^t K(t-r)\D X_r,
\end{equation}
where~$K(t)=\int_0^\infty \E^{-xt}\,\nu(\D x)$ and $X$ solves~\eqref{eq:XProcess}.
The choice of~$\nu$ in Assumption~\ref{assu:main}
yields the kernel~$K^i(t)=\frac{t^{\alpha(i)-1}}{\Gamma(\alpha(i))}\E^{-\delta t}$,
and weak existence and uniqueness were derived in~\cite{ALP17}.
 Equation~\eqref{eq:SVE1} emphasises that, as~$\olambda$ is an evaluation of~$\lambda$ on a lower-dimensional space, the law of~$\olambda_t$ conditional to $\cF_s:=\sigma(\{W_r: r\le s\})$ with $s<t$ is equal to the law of $\olambda_t$ conditional to $\lambda_s$.
\begin{corollary}\label{coro:statV}
\textcolor{black}{Under Assumption \ref{assu:main}, there exists a probability measure~$\mu^\star$ on~$Y^\star=\cM(\overline{\bR}_+,\bR^{d})$ such that if, for any $s\geq 0$, $\mu^\star$ is the distribution of~$\lambda_s$
then~$(\olambda_t)_{t\ge s}$ is a strictly stationary process on~$\bR_+$.}
\end{corollary}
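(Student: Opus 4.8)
The plan is to deduce the strict stationarity of $(\olambda_t)_{t\ge s}$ from that of the Markovian lift $(\lambda_t)_{t\ge s}$, exploiting that $\olambda_t=\langle 1,\lambda_t\rangle$ is a fixed, time-independent functional of the single marginal $\lambda_t$. Let $\mu^\star$ be the invariant measure produced by Theorem~\ref{thm:main}. By time-homogeneity of the generalised Feller semigroup $(P_t)_{t\ge0}$ it suffices to treat $s=0$, the general case following by shifting the origin of time. Throughout I use that, by Theorems~2.11 and~2.13 of~\cite{CT18}, the solution of~\eqref{eq:multiSPDE} is a time-homogeneous Markov process with transition semigroup $(P_t)_{t\ge0}$, and that invariance of $\mu^\star$ reads $P_t^\star\mu^\star=\mu^\star$ for every $t\ge0$.

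First I would show that $(\lambda_t)_{t\ge0}$ started from $\lambda_0\sim\mu^\star$ is strictly stationary as a $Y^\star$-valued process. Its one-dimensional marginals are all equal to $\mu^\star$, since the law of $\lambda_t$ is $P_t^\star\mu^\star=\mu^\star$. For the finite-dimensional distributions, fix $0\le t_1<\dots<t_n$; by the Markov property the joint law of $(\lambda_{t_1},\dots,\lambda_{t_n})$ is determined by the law $\mu^\star$ of $\lambda_{t_1}$ (invariance gives $P_{t_1}^\star\mu^\star=\mu^\star$) together with the transition kernels of $P_{t_2-t_1},\dots,P_{t_n-t_{n-1}}$, all of which depend only on time increments. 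Replacing each $t_i$ by $t_i+h$ with $h\ge0$ leaves both the law of the first coordinate (again $P_{t_1+h}^\star\mu^\star=\mu^\star$) and every increment $t_{i+1}-t_i$ unchanged, so the joint law of $(\lambda_{t_1+h},\dots,\lambda_{t_n+h})$ coincides with that of $(\lambda_{t_1},\dots,\lambda_{t_n})$. This is precisely strict stationarity of $(\lambda_t)_{t\ge0}$.

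Next I would transfer this to $\olambda$. The constant function $1$ belongs to $Y=\cC_b(\overline{\bR}_+,\bR^d)$, so the evaluation map $F\colon Y^\star\to\bR^d$ defined by $F(\lambda):=\langle 1,\lambda\rangle$ is continuous for the weak-$\star$ topology, hence Borel measurable, and $\olambda_t=F(\lambda_t)$ depends only on $\lambda_t$ (this is the content of~\eqref{eq:SVE1} and of the conditional-law observation following it, which identifies $\lambda_s$ as sufficient for the future of $\olambda$). Applying $F$ coordinatewise to each finite-dimensional vector, $(\olambda_{t_1},\dots,\olambda_{t_n})=(F(\lambda_{t_1}),\dots,F(\lambda_{t_n}))$ is the image of $(\lambda_{t_1},\dots,\lambda_{t_n})$ under a fixed, time-independent measurable map, and strict stationarity is preserved under such pushforwards. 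Hence $(\olambda_t)_{t\ge0}$, and after shifting the origin $(\olambda_t)_{t\ge s}$, is strictly stationary, with $\mu^\star$ the distribution of $\lambda_s$.

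The bulk of the work is already carried by Theorem~\ref{thm:main}; the only points requiring care here are, first, that the lift is genuinely a time-homogeneous Markov process, so that starting from $\mu^\star$ yields shift-invariant finite-dimensional distributions — this is exactly what the generalised Feller construction of~\cite{CT18} supplies — and, second, that $\olambda_t$ is a measurable function of the single marginal $\lambda_t$ rather than of the whole trajectory, which is why the weak-$\star$ continuity of $\langle 1,\cdot\rangle$ (equivalently the membership $1\in Y$) is the crucial structural input. I do not anticipate a genuine obstacle beyond this bookkeeping, since the passage from the stationary infinite-dimensional process to its fixed evaluation is routine once the invariant measure is in hand.
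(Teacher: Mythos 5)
Your proposal is correct and follows essentially the same route as the paper's own proof: take the invariant measure $\mu^\star$ from Theorem~\ref{thm:main}, note that the time-homogeneous Markov lift $(\lambda_t)_{t\ge s}$ started from $\mu^\star$ is strictly stationary, and push forward through the fixed weak-$\star$-continuous map $\lambda\mapsto\langle 1,\lambda\rangle$. The paper compresses all of this into two lines, so your write-up simply supplies the standard bookkeeping (invariance plus transition kernels determine shift-invariant finite-dimensional distributions, and measurable images of stationary processes are stationary) that the paper leaves implicit.
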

\begin{proof}
Let~$\mu^\star$ denote an invariant distribution of~$\lambda$, which we note is constructed as a limiting distribution. Provided $\lambda_s$ follows this distribution, the process $(\lambda_t)_{t\ge s}$ is strictly stationary and the relation $\olambda=\langle 1,\lambda \rangle$ then immediately implies the claim.
\end{proof}
It is insightful to compare this type of stationarity for fractional process with existing results.
\textcolor{black}{
\begin{itemize}
    \item In the closest paper to ours \cite{FJ22}, the authors prove that $(\olambda_{t+h})_{t\ge 0}$ converges in law towards a stationary process~$(\olambda_{t}^{{\rm stat}})_{t\ge 0}$ as $h\to\infty$. They characterise properties of the marginal law of this limiting process but do not provide information on its dynamics, in particular it is not known whether~$\olambda^{{\rm stat}}$ satisfies an SVE. Moreover, this convergence result does not imply that $\olambda$ itself is a stationary process.
    \item Hairer \cite{Hairer05} on the other hand looks for an invariant measure on the product space of~$\bR$ (where the fractional OU process lives) and the past of the driving noise (an infinite-dimensional space).
    \item Fractional SDEs with additive noise are also studied by~\cite{li2022non}:
    they find stationary solutions~$Y$ on $\cC(\bR_+)$, meaning  that $(Y_t)_{t\ge0} = (Y_{t+h})_{t\ge0}$ in distribution for all~$h\ge0$.
\end{itemize}
In contrast, our approach allows to construct a stationary process directly on~$\bR_+$.}

\notthis{
\begin{remark}
The authors in~\cite{CT18} provide two types of lifts for the SVE~\eqref{eq:SVE}. The forward curve lift (Section 5.2 of~\cite{CT18}) lies in the Filipovic space which dual is itself. Weak-$\star$-compactness is more affordable than in the standard space of continuous functions. We leave the study of this lift for a subsquequent paper.
The other type of lift,  introduced in Section 5.1 of~\cite{CT18} and displayed in~\eqref{5eq:lift}, takes value in the space of signed measures on~$\overline\bR_+$, which nice tightness criterion is exploited in this paper.
\end{remark}
}

We close this section with an important remark on the consequences of Theorem~\ref{thm:main} and of Corollary~\ref{coro:statV}. Our goal here is to show that in the case of the kernel~$K^i(t)=\frac{t^{\alpha(i)-1}}{\Gamma(\alpha(i))}\E^{-\delta t}$ with $\delta>0$, the limiting distribution of $ V_t$ in \eqref{eq:SVE} is not that of the zero process unless $V_{0}=0$.

Let us denote $V_t^k=\olambda^k_t$. Then, we have
\begin{equation}\label{eq:ExpectationV_tfixed}
    \bE[V_t^k]= V_0^k\left( 1-  \beta_{kk}
    \int_{0}^{t}\E^{-\delta s}s^{\alpha-1} \mathrm{E}_{\alpha,\alpha}(-\beta_{kk} s^\alpha)\D s\right).
\end{equation}

Based on integral representations for the Mittag-Leffler function~\cite[Section~7]{Haubold2011}, we obtain
\begin{equation}\label{eq:ExpectationV_tinfinity}
\lim_{t\uparrow\infty}\bE[V_t^k] = V_0^k\left( 1-  \beta_{kk}
    \int_{0}^{\infty}\E^{-\delta s}s^{\alpha-1} \mathrm{E}_{\alpha,\alpha}(-\beta_{kk} s^\alpha)\D s\right)
    = V_0^k\left( 1-  \frac{\beta_{kk} }{\beta_{kk}+\delta^{\alpha}}
    \right).
\end{equation}

This calculation shows that
\begin{itemize}
\item{If $\delta=0$, then $\lim_{t\uparrow\infty}\bE[V_t^k]=0$ no matter what the value of $V_0^k$ is. It is indeed true that if $\delta=0$, then $V_t$ converges to the trivial zero process.}
\item{If $\delta>0$, then $\lim_{t\uparrow\infty}\bE[V_t^k]=0$ if and only if  $V_0^k=0$.}
\end{itemize}

So if $\delta\neq 0$ and $V_0^{k}\neq 0$, then $\lim_{t\uparrow\infty}\bE[V_t^k]\neq 0$. It is also interesting to note that in this case the limiting behaviour depends on the initial condition $V_0^{k}\neq 0$, a point also noted in~\cite{FJ22}.  Hence, in the case of $\delta\neq 0$ and $V_0^{k}\neq 0$ the limiting behaviour of $V_t$ is not zero, whereas if $\delta=0$, then no matter what the initial condition $V_0^{k}$ is, we will have that $\lim_{t\uparrow\infty}\bE[V_t^k]=0$.

\color{black}

\section{A condition for existence}\label{sec:condition}
This section aims at proving Proposition~\ref{prop:existencecondition}.
We start by stating and proving an extension of Krylov-Bogoliubov theorem~\cite[Theorem~11.7]{DPZ92} to the setting of generalised Feller processes on~$\cB^\varrho(\bm X)$ for any completely regular Hausdorff topological space~$\bm X$.
\begin{lemma}\label{lemma:Krylov}
Let~$(P_t)_{t\ge0}$ be a generalised Feller semigroup.
Suppose that there exists~$\gamma \in \cP(\bm X)$ and a strictly positive sequence~$T_n$ going to~$+\infty$ as~$n$ goes to~$+\infty$ such that \vspace{-5pt}
\begin{enumerate}[(i)]
    \item the sequence of measures $Q^\ast_{T_n}\gamma:=\frac{1}{T_n} \int_0^{T_n} P^\ast_t\gamma\dt$ converges weakly to some~$\mu\in \cP(\bm X)$;
    \item It holds that
    \begin{align}\label{eq:AbstractBound}
        \sup_{t\ge0}\int_{\bm{X}} P_{t} \varrho(\lambda)\gamma(\D\lambda)<\infty
    \end{align}
\end{enumerate}
Then~$\mu$ is an invariant measure for~$(P_t)_{t \ge0}$.
\end{lemma}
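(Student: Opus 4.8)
The plan is to adapt the classical Krylov--Bogoliubov argument \cite[Theorem~11.7]{DPZ92} to the generalised Feller setting, the one genuinely new feature being that the semigroup acts on $\cB^\varrho(\bm X)$ rather than on $\cC_b(\bm X)$. To conclude that $\mu$ is invariant it is enough to verify $\langle P_s\varphi,\mu\rangle=\langle\varphi,\mu\rangle$ for every $s\ge0$ and every $\varphi\in\cB^\varrho(\bm X)$, so I would fix such $s$ and $\varphi$ and study the ``defect of invariance'' $\langle P_s\varphi,Q^\ast_{T_n}\gamma\rangle-\langle\varphi,Q^\ast_{T_n}\gamma\rangle$ along the averaging sequence, showing that it vanishes while each term converges to the corresponding integral against $\mu$.

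First I would rewrite the defect using the adjoint relation together with the semigroup property. Writing $g(t):=\langle P_t\varphi,\gamma\rangle$, one has $\langle P_s\varphi,P^\ast_t\gamma\rangle=\langle P_tP_s\varphi,\gamma\rangle=g(t+s)$ and $\langle\varphi,P^\ast_t\gamma\rangle=g(t)$, so that
\begin{equation*}
\langle P_s\varphi,Q^\ast_{T_n}\gamma\rangle-\langle\varphi,Q^\ast_{T_n}\gamma\rangle
=\frac{1}{T_n}\int_0^{T_n}\bigl(g(t+s)-g(t)\bigr)\dt
=\frac{1}{T_n}\left(\int_{T_n}^{T_n+s}g(u)\du-\int_0^s g(u)\du\right),
\end{equation*}
the last step being a change of variables $u=t+s$ after which the overlapping integral $\int_s^{T_n}$ cancels. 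To bound $g$ I would invoke positivity of $P_t$, giving $\abs{P_t\varphi}\le P_t\abs{\varphi}\le\norm{\varphi}_\varrho P_t\varrho$ and hence $\abs{g(t)}\le\norm{\varphi}_\varrho\int_{\bm X}P_t\varrho(\lambda)\,\gamma(\D\lambda)\le M\norm{\varphi}_\varrho$, where $M:=\sup_{t\ge0}\int_{\bm X}P_t\varrho(\lambda)\,\gamma(\D\lambda)<\infty$ is exactly the content of \eqref{eq:AbstractBound}. Since the two surviving integrals run over intervals of length $s$, the defect is at most $2s\,M\,\norm{\varphi}_\varrho/T_n$, which tends to $0$.

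Next I would pass to the weak limit, and this is where the main obstacle lies: weak convergence $Q^\ast_{T_n}\gamma\to\mu$ only yields $\langle\psi,Q^\ast_{T_n}\gamma\rangle\to\langle\psi,\mu\rangle$ for $\psi\in\cC_b(\bm X)$, whereas both $\varphi$ and $P_s\varphi$ merely lie in $\cB^\varrho(\bm X)$, whose elements may grow like $\varrho$. I would close this gap with a uniform-integrability argument powered by \eqref{eq:AbstractBound}: averaging \eqref{eq:AbstractBound} gives $\sup_n\langle\varrho,Q^\ast_{T_n}\gamma\rangle\le M$, and since the sublevel sets $K_R$ of $\varrho$ are compact (hence closed) the weight $\varrho$ is lower semicontinuous, so the portmanteau theorem yields $\langle\varrho,\mu\rangle\le M$. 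Then, for any $\psi\in\cB^\varrho(\bm X)$ and $\varepsilon>0$, choosing $\psi_k\in\cC_b(\bm X)$ with $\norm{\psi-\psi_k}_\varrho<\varepsilon$ (possible because $\cB^\varrho(\bm X)$ is the $\norm{\cdot}_\varrho$-closure of $\cC_b(\bm X)$) and splitting
\begin{equation*}
\bigl|\langle\psi,Q^\ast_{T_n}\gamma\rangle-\langle\psi,\mu\rangle\bigr|\le\norm{\psi-\psi_k}_\varrho\bigl(\langle\varrho,Q^\ast_{T_n}\gamma\rangle+\langle\varrho,\mu\rangle\bigr)+\bigl|\langle\psi_k,Q^\ast_{T_n}\gamma\rangle-\langle\psi_k,\mu\rangle\bigr|
\end{equation*}
and letting $n\to\infty$ then $\varepsilon\to0$ upgrades the convergence to all of $\cB^\varrho(\bm X)$. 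Applying this to $\psi=\varphi$ and to $\psi=P_s\varphi\in\cB^\varrho(\bm X)$, and combining with the vanishing defect from the previous step, gives $\langle P_s\varphi,\mu\rangle-\langle\varphi,\mu\rangle=0$, completing the argument. The genuinely delicate points I would watch are the lower (rather than continuous) semicontinuity of $\varrho$, which forces the use of the portmanteau inequality for the $\mu$-bound, and the consistent interpretation of $P_t\varrho$ through the underlying Markov process, $\varrho$ itself not necessarily belonging to $\cB^\varrho(\bm X)$.
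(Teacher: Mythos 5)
Your proof is correct, and it takes a genuinely different route from the paper's on the one point where the generalised Feller setting actually bites. The averaging core is the same in both: the paper also uses the adjoint/semigroup identity to shift $\frac{1}{T_n}\int_0^{T_n}\langle P_t\vphi, P_s^\ast\gamma\rangle\ds$ into $\frac{1}{T_n}\int_t^{t+T_n}\langle\vphi,P_s^\ast\gamma\rangle\ds$ and discards the two boundary strips; your explicit bound $\abs{\langle P_t\vphi,\gamma\rangle}\le M\norm{\vphi}_\varrho$, with $M$ the supremum in \eqref{eq:AbstractBound}, is exactly what makes those strips vanish, a point the paper uses only implicitly. The divergence is in passing to the limit against test functions that merely lie in $\cB^\varrho(\bm X)$. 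The paper localises: it splits every integral over the compact sublevel sets $K_R$ and their complements, invokes weak convergence on $K_R$ through a cut-off approximation in the spirit of \cite[Theorem 29.1]{Billingsey95} (since $\one_{K_R}\vphi$ is not continuous), bounds the tails by the decay $\lim_{R\uparrow\infty}\sup_{\bm X\setminus K_R}\abs{f}/\varrho=0$ from \cite[Equation (2.3)]{CT18} together with the Fatou-type lemma \cite[Theorem A.3.12]{DE97}, and finally sends $R\to\infty$ to kill the remainder $\ep(R)$. You instead argue globally: the uniform bounds $\sup_n\langle\varrho,Q^\ast_{T_n}\gamma\rangle\le M$ and $\langle\varrho,\mu\rangle\le M$ (your portmanteau step for the lower semicontinuous weight is the exact counterpart of the paper's Fatou-type lemma), combined with the $\norm{\cdot}_\varrho$-density of $\cC_b(\bm X)$ in $\cB^\varrho(\bm X)$, upgrade weak convergence to convergence of integrals of every $\cB^\varrho$ function at once. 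Your route buys a cleaner argument — no $K_R$ bookkeeping, no cut-off approximation, and a reusable principle (weak convergence plus uniformly bounded $\varrho$-moments implies convergence of $\cB^\varrho$ integrands) of independent interest; the paper's route stays entirely inside the toolkit of \cite{CT18} and makes visible where the structural decay property of $\cB^\varrho$ functions enters. Both proofs share the same technical caveat, which you rightly flag: $\varrho$ itself need not belong to $\cB^\varrho(\bm X)$, so $P_t\varrho$ and the identity $\langle\varrho,P_t^\ast\gamma\rangle=\int_{\bm X} P_t\varrho\,\D\gamma$ must be read through the Markov process associated to the semigroup by \cite[Theorems 2.11 and 2.13]{CT18}, exactly as in hypothesis (ii) of the statement.
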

\begin{remark}
The original Krylov-Bogoliubov theorem for standard Feller semigroups does not require~(ii). 
\end{remark}
On Polish spaces,
Prokhorov's theorem~\cite[Theorem~2.3]{DPZ92} states that (i) is equivalent to tightness of~$(P_t^\ast \gamma)_{t\ge0}$.
We will invoke an extension of this result to completely regular spaces~\cite[Chapitre IX, Section 5.5, Th\'eor\`eme 1]{bourbaki2006mesures} as dual spaces equipped with the weak-$\star$-topology are not Polish. This result relies upon the so-called Prokhorov condition for a subset~$H$ of the space of bounded Radon measures~$\cM_b(T)$, where $T$ is completely regular. In the case~$H=\cP(\bm{X})\subset \cM_b(\bm{X})$ of interest to us, this  preliminary condition is equivalent to tightness. We now state the theorem.
\begin{theorem}[Bourbaki]\label{th:Prokhorov}
    Let $T$ be a completely regular space, and $H$ a subset of~$\cM_b(T)$ satisfying Prokhorov's condition; then $H$ is relatively compact in $\cM_b(T)$ equipped with the topology of weak convergence.
\end{theorem}
As the dual of a separable Banach space equipped with the weak-$\ast$-topology, $Y^\star$ is a completely regular space and so is the subset~$\bm{X}$.
\begin{proof}
Fix~$t > 0$ and~$\vphi \in\cB^\varrho(\bm X)$, then by definition~$P_t \vphi \in \cB^\varrho(\bm X)$.
Let~$R>0$, we know from~\cite[Equation (2.3)]{CT18} that $f\in\cB^\varrho(\bm X)$ if and only if~$f\lvert_{K_R} \in \cC(K_R)$, \textcolor{black}{where we recall~$K_R= \{x\in \bm X: \varrho(x)\le R\}$}. We thus need to work on~$K_R$ to apply the weak convergence of (i):
\[
\langle \vphi, P^\ast_t \mu\rangle
= \langle P_t \vphi, \mu\rangle
= \int_{\bm X \setminus K_R} P_t \vphi \,\D\mu + \int_{K_R} P_t \vphi \,\D\mu
= \int_{\bm X \setminus K_R} P_t \vphi \,\D\mu + \lim_{n\uparrow\infty} \int_{K_R} P_t \vphi \,\D Q_{T_n}^\ast \gamma.
\]
\textcolor{black}{Note that in the last computation, weak convergence of $Q_{T_n}^\ast \gamma$ to $\mu$ can be used even though the function $\one_{K_R}\vphi$ is not in~$\cC(\bm X)$.
This can be done following the proof of approximation by appropriate cut-off functions as in \cite[Theorem 29.1]{Billingsey95}.}
Then we go back to $\bm X$ to apply the adjoint property
\begin{align}
\int_{K_R} P_t \vphi \,\D Q_{T_n}^\ast \gamma
&= \frac{1}{T_n} \int_0^{T_n}\langle P_t \vphi , P_{s}^\ast \gamma \rangle \ds - \int_{\bm X\setminus K_R} P_t \vphi \,\D Q_{T_n}^\ast \gamma\nonumber\\
&=\frac{1}{T_n} \int_t^{t+T_n} \langle \vphi , P_{s}^\ast \gamma \rangle\ds - \int_{\bm X\setminus K_R} P_t \vphi \,\D Q_{T_n}^\ast \gamma.\nonumber
\end{align}
We observe that
\[
\frac{1}{T_n} \int_t^{t+T_n} \langle \vphi , P_{s}^\ast \gamma \rangle\ds
=  \frac{1}{T_n} \int_0^{T_n} \langle \vphi , P_{s}^\ast \gamma \rangle\ds
+ \frac{1}{T_n} \int_{T_n}^{t+T_n} \langle \vphi , P_{s}^\ast \gamma \rangle\ds
- \frac{1}{T_n} \int_0^{t} \langle \vphi , P_{s}^\ast \gamma \rangle\ds,
\]
where the second and third terms tend to zero as $n$ goes to infinity, while the first one is~$\langle\vphi,Q^\ast_{T_n}\gamma\rangle$. We relocate to $K_R$ for the weak convergence:
\begin{align}
\lim_{n\uparrow\infty} \frac{1}{T_n}\int_t^{t+T_n} \langle \vphi , P_{s}^\ast \gamma \rangle \ds &= \lim_{n\uparrow\infty} \left( \int_{K_R} \vphi \,\D Q_{T_n}^\ast \gamma + \int_{\bm X \setminus K_R} \vphi \,\D Q_{T_n}^\ast \gamma  \right) \nonumber\\
&= \lim_{n\uparrow\infty} \left( \int_{\bm X} \one_{K_R}\vphi \,\D Q_{T_n}^\ast \gamma + \int_{\bm X \setminus K_R} \vphi \,\D Q_{T_n}^\ast \gamma  \right) \nonumber\\
&= \int_{K_R} \vphi \,\D\mu + \lim_{n\uparrow\infty} \int_{\bm X \setminus K_R} \vphi \,\D Q_{T_n}^\ast \gamma.\nonumber
\end{align}
In the last computation, once again the weak convergence of $Q_{T_n}^\ast \gamma$ to $\mu$ can be used even though the function $\one_{K_R}\vphi$ is not in~$\cC(\bm X)$, as in~\cite[Theorem 29.1]{Billingsey95}. Overall, this yields our objective plus a remainder
\begin{align*}
\langle \vphi, P^\ast_t \mu\rangle &= \langle \vphi,\mu\rangle - \int_{\bm X\setminus K_R} \vphi \,\D\mu + \int_{\bm X \setminus K_R} P_t \vphi \,\D\mu + \lim_{n\uparrow+\infty} \left(\int_{\bm X \setminus K_R} \vphi \,\D Q_{T_n}^\ast \gamma - \int_{\bm X\setminus K_R} P_t \vphi \,\D Q_{T_n}^\ast \gamma \right)\\
&=:  \langle \vphi,\mu\rangle + \ep(R).
\end{align*}

To deal with $\ep(R)$, we apply a Fatou-type lemma for measures~\cite[Theorem~A.3.12]{DE97}. For any~$f\in\cB^\varrho(\bm X)$,
\begin{align}
\Big\lvert \int_{\bm X\setminus K_R} f \,\D\mu \Big\lvert
\le \sup_{x\in\bm X\setminus K_R} \frac{\abs{f(x)}}{\varrho(x)} \int_{\bm X} \varrho \,\D \mu
&\le \sup_{x\in\bm X\setminus K_R} \frac{\abs{f(x)}}{\varrho(x)}  \liminf_{n\uparrow+\infty} \int_{\bm X} \varrho \,\D Q_{T_n}^\ast \gamma \nonumber\\
&= \sup_{x\in\bm X\setminus K_R} \frac{\abs{f(x)}}{\varrho(x)}  \liminf_{n\uparrow+\infty} \frac{1}{T_n}\int_0^{T_n} P_s \varrho(\lambda)\ds , \label{eq:EstimateFatou}
\end{align}
where~$\lambda$ is a $\bm X$-valued random variable with distribution~$\gamma$.
By integrating over $\gamma$ both sides of~\eqref{eq:EstimateFatou} and noting this integral has mass one we get
$$
\Big\lvert \int_{\bm X\setminus K_R} f \,\D\mu \Big\lvert \le \sup_{x\in\bm X\setminus K_R} \frac{\abs{f(x)}}{\varrho(x)}  \left( \sup_{t\ge0}\int_{\bm X} P_{t} \varrho(\lambda)\gamma(\D\lambda)\right)
$$
The same computations also yield the same estimate for
$\abs{\int_{\bm X \setminus K_R} f \,\D Q_{T_n}^\ast \gamma} $.
By~\cite[Equation~(2.3)]{CT18} we have for any~$f\in\cB^\varrho(\bm X)$,
\[
\lim_{R\uparrow\infty} \sup_{\bm X\setminus K_R} \frac{\abs{f(x)}}{\varrho(x)} =0.
\]
The uniform bound~\eqref{eq:AbstractBound} thus ensures that~$\ep(R)$ vanishes as~$R$ goes to $+\infty$.
\end{proof}
\begin{proof}[Proof of Proposition~\ref{prop:existencecondition}]
Let~$(P_t)_{t\ge0}$ be the generalised Feller semigroup associated to~$(\lambda_t)_{t\ge0}$. Hence, for any~$\bm X$-valued random variable~$\lambda_0$, $P_t\varrho(\lambda_{0})=\bE_{\lambda_0\sim\gamma}[\varrho(\lambda_t)]$.

In~\cite{CT18}, the analysis is performed with the weight function $\varrho(\lambda) = 1+\norm{\lambda}^2_{Y^\star}.$
The only necessary estimate is~$\bE[\varrho(\lambda_t)]\le C \varrho(\lambda_0)$, for some finite constant~$C>0$, and we note that the inequality~$\bE[1+\norm{\lambda_t}^2_{Y^\star}] \le C (1+\norm{\lambda_0}^2_{Y^\star})$ implies~$\bE[1+\norm{\lambda_t}_{Y^\star}] \le \sqrt{2C} (1+\norm{\lambda_0}_{Y^\star})$.
Therefore, the weight function~$\varrho(\lambda) =1+\norm{\lambda}_{Y^\star}$ is also admissible, and the assumption~\eqref{eq:conditionnorm} implies
\begin{equation}\label{eq:conditionrho}
\sup_{t\ge 0} \int_{\bm{X}} \bE_{\lambda_0}[\varrho(\lambda_t)]  \D \gamma(\lambda_0)=\sup_{t\ge 0}\bE[\varrho(\lambda_t)] <\infty.
\end{equation}
This yields condition (ii) of Lemma~\ref{lemma:Krylov}.

For all~$t\ge0$, let $\Lambda_t$ be the $Y^\star$-valued random variable with distribution~$Q^\ast_t\gamma$. Inspection of the proof of~\cite[Lemma~2.9]{BD19} shows that, even though~$Y^\star$ is not a Polish space, the family $(\Lambda_t)_{t\geq 0}$ is tight if there exists a tightness function $G:Y^\star\to\bR_+$ such that~\mbox{$\sup_{t\ge0} \bE[G(\Lambda_t)]<\infty$.} We observe that an admissible weight function is also a tightness function by design, hence proving tightness of~$(\Lambda_t)_{t\ge 0}$ boils down to showing
\[
\sup_{T\ge 0} \frac{1}{T}\int_0^T \bE[\varrho(\lambda_t)] \dt <\infty,
\]
which is a consequence of~\eqref{eq:conditionrho}. We then invoke the version of Prokhorov's theorem on completely regular spaces (Theorem~\ref{th:Prokhorov}) to deduce that~$(Q^\ast_t\gamma)_{t\ge0}$ is relatively compact in $\cP(\bm{X})$ and hence has a converging subsequence.

Therefore, both conditions of Lemma~\ref{lemma:Krylov} are satisfied as soon as~\eqref{eq:conditionnorm} holds.
\end{proof}

\section{The multidimensional \textcolor{black}{measure-valued stochastic evolution equation}}\label{S:MultiD_SPDE}

For simplicity, we assume that~$\nu$ is diagonal, which releases some indexation load and appoints the same kernel for drift and diffusion.
We can then define the matrix~$K(t):=\langle \E^{-t\cdot} ,\nu\rangle$ and since~$\nu$ is diagonal this yields~$K^i(t)=\langle \E^{-t\cdot} ,\nu^i\rangle_i$ where the superscript~$i$ stands for the~$i$th component of the diagonal.

We replicate the setting of Markovian affine processes characterised in~\cite{DFS03} and cast it in the appropriate multidimensional adaptation of~\cite{CT18}.
In the setting of~\eqref{eq:multiSPDE}, we want the total mass~$\olambda^i$ of each component to be either a square root process or an Ornstein-Uhlenbeck-type process; this is achieved by the following set of assumptions:
\begin{assumption}\label{assu:affine}
The following hold:
\vspace{-.3cm}
\begin{itemize}
    \item if~$i\le\tilde{d}$ then~$c_{ijk}=0$ for all $j,k$; if moreover $k\neq i$ then $\sigma_{ijk}=\beta_{ik}=0$ for all $j$;
    \item if $i>\tilde{d}$ and $k>\tilde{d}$, then $\sigma_{ijk}=0$ for all $j$.
\end{itemize}
\end{assumption}
\begin{remark}
This assumption is strictly weaker than Assumption~\ref{assu:main}, hence Examples~\ref{examples} are still covered.
\end{remark}
We can now display the equations satisfied by the total mass of each component.
For each~$i\in\llbracket 1,\tilde{d}\rrbracket$, $\overline{\lambda}^i$ is then an autonomous square-root process living in~$\bR_+$, and satisfies
\begin{equation}\label{eq:rootcomponent}
\olambda^i_t = \langle \E^{-t\cdot}, \lambda_0^i\rangle + \int_0^t K^i(t-s) (-\beta_i\olambda^i_s)\ds +  \int_0^t K^i(t-s)\sqrt{\olambda^i_s} \sum_{j=1}^m \sigma_{ij}\D W^j_s,
\end{equation}
with no feedback from the other components.
For each~$i\in\llbracket \tilde{d}+1,d\rrbracket$, $\olambda^i$~is an OU-type process living in~$\bR$, which allows feedback from every component in the drift but only from the first~$\tilde{d}$ square-root components in the diffusion:
\begin{equation}\label{eq:OUcomponent}
\olambda^i_t = \langle \E^{-t\cdot}, \lambda_0^i\rangle + \int_0^t K^i(t-s) \left(-\sum_{k=1}^d \beta_{ik}\olambda^k_s\right)\ds + \sum_{j=1}^m \sum_{k=1}^{\tilde{d}} \int_0^t K^i(t-s)\left(\sigma_{ijk} \sqrt{\olambda^k_s}+c_{ijk}\right)\D W_s^j.
\end{equation}
The reason is that a component needs to lie on the positive cone if its square root appears somewhere in the \textcolor{black}{stochastic evolution equation}.




We now go through the one-dimensional results of~\cite[Section 4]{CT18} and highlight the differences with the multidimensional case presented here. Instead of the spaces~$\cE^w$ introduced in~\cite[(4.7)]{CT18}, we define for all~$n\in\bN$,
    $$
    \cE^n := \left\{ \lambda_0\in Y^\star : \overline{\eta}^{i,n}_t\ge0, \text{ for all }i\in\llbracket1,\tilde{d}\rrbracket, \; \eta^{i,n}_t\in\cM(\overline\bR_+,\bR) \text{ for all }i\in\llbracket \tilde{d}+1,d\rrbracket \text{ and } t\ge0\right\},
    $$
    where
\begin{equation}
    \left\{
    \begin{array}{ll}
    \displaystyle \overline{\eta}^{i,n}_t = \langle \E^{-t\cdot}, \lambda_0^i\rangle - \bigg(\beta_i-n\sum_{j=1}^m \sigma_{ij}^2\bigg) \int_0^t K^i(t-s) \overline{\eta}^{i,n}_s\ds, \qquad\qquad\qquad &i\in\llbracket1,\tilde{d}\rrbracket,\\
    \displaystyle  \D\eta^{i,n}_t = \cA^\star \eta^{i,n}_t\dt + \nu \left( \sum_{k=1}^d \beta_{ik} \overline{\eta}^{k,n}_t - n\sum_{j=1}^m \sum_{k=1}^{\tilde{d}} \left(\sigma_{ijk} \sqrt{\overline{\eta}^{k,n}_s}+c_{ijk}\right)^2 \right)\dt, \quad \eta_0^i = \lambda_0^i, & i\in\llbracket \tilde{d}+1,d\rrbracket,
    \end{array}
    \right.
    \label{eq:Equationeta}
\end{equation}
    where ~$\cA^\star$ is the linear operator of the form ~$\cA^\star \lambda(\D x)=-x\lambda(\D x)$.
Note that $\eta$ satisfies a deterministic equation and the condition for~$i\in\llbracket \tilde{d}+1,d\rrbracket$ in the definition of~$\cE^n$ is necessarily satisfied, while we want to restrict the state space of the square-root processes to $\bR_+$.
These definitions allow to define the invariant space $\displaystyle\cE:=\cap_{n\in\bN} \cE^n$ and its polar cone~$\cE_\star := \big\{ y\in Y: \langle y,\lambda\rangle \le 0 \text{ for all } \lambda\in\cE\big\}$.
For~$\cE$ to be well defined we had to let the coefficients grow with~$n$ at a uniform speed across the components, which is a notable difference with the one-dimensional framework of~\cite{CT18} where a single speed~$w$ was needed.

To define the generator of the \textcolor{black}{stochastic evolution equation}, we introduce the set~$\cD:=\{ y\in Y : \langle y,\nu \rangle \text{ is well-defined} \}$ and, for each~$y\in \cD$, the set~$\bm F$ of Fourier basis elements of the form \vspace{-5pt}
\begin{equation}\label{eq:FourierElts}
f_y: \cE\to[0,1]; \,\lambda\mapsto \exp(\langle y,\lambda \rangle).
\end{equation}
We also recall that the resolvent of the second kind corresponding to~$K$ is the kernel~$R\in L^1(\bR_+,\bR^{d\times d})$ such that
\[
K\ast R = R\ast K = K-R.
\]
The following is the analogue of~\cite[Theorem~4.17]{CT18}.
\begin{proposition}\label{prop:GFP}
Let Assumption~\ref{assu:affine} hold.
Assume moreover that~$\lambda_0\in\cE$, $K\in L^2_{loc}(\bR_+,\bR^{d\times d})$ and, for all~$i\in\llbracket 1,d\rrbracket$ and~$w>0$, $K^i$ and~$R^w_i$ are non-negative, where~$R^w_i$ is the resolvent of the second kind of~$wK^i$.
\begin{enumerate}[(i)]
    \item The \textcolor{black}{stochastic evolution equation}~\eqref{eq:multiSPDE} admits a unique Markovian solution with values in~$\cE$ given by a generalised Feller semigroup~$(P_t)_{t\ge0}$ on~$\cB^\varrho(\cE)$.
The generator~$A:\bm F \to \cB^\varrho(\cE)$ associated to the semigroup~$(P_t)_{t\ge0}$ reads
\begin{equation}
   A f_y(\lambda) =  f_y(\lambda) \left( \langle \cA y,\lambda\rangle + \langle y b(\olambda),\nu\rangle
+ \half \langle y \sigma^\top(\olambda),\nu\rangle \langle y\sigma(\olambda),\nu\rangle \right)=:f_y(\lambda) \mathcal{R}(y,\lambda),
\label{eq:mygen}
\end{equation}
where the coefficients are given in~\eqref{eq:multicoefs} and $\cA$ is the adjoint operator of $\cA^\star$.
\footnote{This is the multidimensional version of the generator given in~\cite[(4.21)]{CT18}, where there seems to be a mild typo. We have adopted a slightly non-standard notation in that we first define ~$\cA^\star \lambda(\D x)=-x\lambda(\D x)$ and then set~$\cA$ to be the adjoint operator of~$\cA^\star$ mainly for reasons of being consistent with the existing literature because $\cA^\star$ is typically considered to act on processes living in $Y^{\star}$. }
    \item This generalised Feller process allows to construct a probabilistically weak and analytically mild solution, i.e. for~$y\in Y$,
    \[
    \langle y,\lambda_t \rangle = \langle y \E^{-t\cdot} , \lambda_0 \rangle + \int_0^t \langle y \E^{-(t-s)\cdot} , \nu \rangle \,\D X_s.
    \]
    \item The affine transform formula is satisfied, i.e.
    \[
    \bE_{\lambda_0}[\exp(\langle y_0,\lambda_t\rangle)] = \exp(\langle y_t,\lambda_0\rangle),
    \]
    where~$y_t$ solves
    \begin{equation}\label{eq:ODEyt}
    \langle \partial_t y_t,\lambda\rangle = \mathcal{R}(y_t,\lambda),
    \end{equation}
    for all~$\lambda\in\cE$, $y_0\in\cE_\star$, $t\ge0$, where~$\mathcal{R}$ is defined in~\eqref{eq:mygen}. Furthermore, $y_t\in\cE_\star$ for all~$t\ge0$.
    \item For any $\lambda_0 \in \cE$, the corresponding  stochastic Volterra equation given by
    \[
    \olambda_t=\langle 1,\lambda_t\rangle = \langle \E^{-t\cdot} , \lambda_0 \rangle + \int_0^t K(t-s)\,\D X_s
    \]
    admits a probabilistically weak solution.
    \item For all~$u\in\bR$, the Laplace transform of the Volterra equation~$\olambda_t$ is
    \[
    \bE_{\lambda_0}\left[\E^{u\olambda_t}\right] = \exp\left( u\langle \E^{-t\cdot},\lambda_0\rangle + \int_0^t \langle \E^{-(t-s)\cdot},\lambda_0\rangle  \left( \langle y_s b(\olambda_0),\nu\rangle
+ \half \langle y_s \sigma^\top(\olambda_0),\nu\rangle \langle y_s\sigma(\olambda_0),\nu\rangle \right)\ds\right).
    \]
\end{enumerate}
\end{proposition}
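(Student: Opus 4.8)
The overall strategy is to transplant the proof of \cite[Theorem~4.17]{CT18} to the coupled, multidimensional setting, with the structural restrictions of Assumption~\ref{assu:affine} playing the role that the admissibility conditions of \cite{DFS03} play for Markovian affine processes. First I would identify the candidate generator on the Fourier core $\bm F$. Applying It\^o's formula formally to $f_y(\lambda_t)=\exp(\langle y,\lambda_t\rangle)$ along the mild dynamics \eqref{eq:multiSPDE}--\eqref{eq:multicoefs}, the transport part $-x\lambda_t\dt$ produces the term $\langle \cA y,\lambda\rangle$ (using $\cA y(x)=-xy(x)$), the finite-variation part of $X$ produces $\langle y b(\olambda),\nu\rangle$, and the martingale part of $X$ produces the quadratic term; collecting these gives exactly $f_y(\lambda)\mathcal R(y,\lambda)$ as in \eqref{eq:mygen}. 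This is the multidimensional analogue of \cite[(4.21)]{CT18}.

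The analytic heart of (i) is then to realise $\mathcal R$ as the generator of a genuine generalised Feller semigroup on $\cB^\varrho(\cE)$. Here I would invoke the abstract generation theory of \cite{DT10} as used in \cite{CT18}: one checks that the span of the core $\bm F$ is dense (exponentials separate points, so this is a Stone--Weierstrass argument on the compacts $K_R$), that the dissipativity, range and positivity conditions of the generalised Feller Hille--Yosida theorem hold, and, most importantly, that the weight bound $\bE_{\lambda_0}[\varrho(\lambda_t)]\le C\varrho(\lambda_0)$ holds locally uniformly in $t$ for $\varrho(\lambda)=1+\norm{\lambda}^2_{Y^\star}$. This last estimate is what guarantees local boundedness of $\norm{P_t}_{L(\cB^\varrho(\cE))}$ and is the only place where the precise growth of the coefficients enters. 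A prerequisite is that the state space $\cE$ be invariant under the flow of $\lambda$: this is where the approximating quantities $\overline{\eta}^{i,n}_t$ of \eqref{eq:Equationeta} enter, and the hypotheses that $K^i$ and the resolvents $R^w_i$ be non-negative are used, through a comparison/positivity argument for linear Volterra equations, to force $\overline{\eta}^{i,n}_t\ge0$ and hence $\cE=\cap_n\cE^n$ invariant. The associated weak process with c\`agl\`ad paths, yielding the mild representation in (ii), then follows from \cite[Theorems~2.11 and~2.13]{CT18} applied to the mild formulation \eqref{eq:mildSPDE}.

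For (iii) I would solve the generalised Riccati ODE \eqref{eq:ODEyt}, $\langle\partial_t y_t,\lambda\rangle=\mathcal R(y_t,\lambda)$ for all $\lambda\in\cE$, started from $y_0\in\cE_\star$, and prove the \emph{dual} invariance $y_t\in\cE_\star$ for all $t\ge0$; matching $P_t f_{y_0}(\lambda_0)=f_{y_t}(\lambda_0)$ then yields $\bE_{\lambda_0}[\exp(\langle y_0,\lambda_t\rangle)]=\exp(\langle y_t,\lambda_0\rangle)$. Assumption~\ref{assu:affine} is indispensable precisely here: its triangular block structure (autonomous square-root coordinates $i\le\tilde d$ with diffusion fed only by their own mass, OU-type coordinates $i>\tilde d$ whose diffusion is driven only by the square-root coordinates) decouples the Riccati system into an autonomous quadratic sub-system in the first $\tilde d$ coordinates and a system that is affine-in-$y$ in the remaining ones, so that cone invariance reduces to a sub-tangential (inward-pointing) condition on the boundary of $\cE_\star$, which is exactly what the sign constraints in the definition of $\cE^n$ encode. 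Finally, parts (iv) and (v) are corollaries: applying the stochastic Fubini theorem to the mild form \eqref{eq:mildSPDE} tested against $y\equiv1$ recovers the stochastic Volterra equation, whose weak solvability is inherited from that of $\lambda$, while (v) is the affine transform formula of (iii) specialised to the constant initial datum $y_0\equiv u$.

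I expect the main obstacle to be the cone-invariance step in (iii): showing that the coupled generalised Riccati flow \eqref{eq:ODEyt} preserves the polar cone $\cE_\star$. Unlike the scalar case of \cite{CT18}, the coordinates interact through $\beta$ and $\sigma$, so one cannot argue coordinate by coordinate; the argument must exploit the triangular block structure of Assumption~\ref{assu:affine} together with the non-negativity of the kernels and resolvents to close the positivity of the square-root components, and it is exactly the failure of this step when those structural constraints are dropped that explains why they appear in the statement.
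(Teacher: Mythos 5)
Your overall plan (adapt \cite[Theorem~4.17]{CT18}, compute the generator on the Fourier core $\bm F$, use non-negativity of $K^i$ and $R^w_i$ for invariance of $\cE$, and treat (ii), (iv), (v) as corollaries) matches the paper, and flagging the $\cE_\star$-related step in (iii) as delicate is fair. However, the central step of (i) is mischaracterised in a way that leaves a genuine gap. You propose to generate the semigroup directly via a generalised Feller Hille--Yosida theorem, checking ``dissipativity, range and positivity conditions''. Neither the paper nor \cite{CT18} proceeds this way, and for good reason: the operator in \eqref{eq:mygen} is a degenerate, non-local second-order operator on a space of measures, and there is no known way to verify a range condition for it directly. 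The actual construction is by approximation: the paper replaces \cite[Equation~(4.26)]{CT18} by finite-activity jump processes, e.g.\ for $i\le\tilde{d}$,
\[
\D\lambda^{i,n}_t = \cA^\star \lambda^{i,n}_t\dt + \nu \Big(\beta_{ii} - n \sum_{j=1}^m \sigma_{iji}^2\Big) \overline{\lambda}^{i,n}_t\dt + \nu \sum_{j=1}^m \D N^{ij,n}_t,
\]
where $N^{ij,n}$ jumps by $1/n$ with intensity $n^2\sigma_{iji}^2\,\overline{\lambda}^{i,n}_t$ (with an analogous equation for the rows $i>\tilde{d}$), and then passes to the limit $n\to\infty$ using the stability theory for generalised Feller semigroups; Assumption~\ref{assu:affine} is what lets the invariance of each $\cE^n$ and the bounds of \cite[Propositions~4.6, 4.8, Lemma~4.11, Theorem~4.13]{CT18} go through row by row, square-root rows first, then OU rows. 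Without this approximation scheme (or a substitute for it), your argument for (i) does not close.

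Two further points. First, you apply It\^o's formula ``formally'' to $f_y(\lambda_t)$, but $S_t=\langle y,\lambda_t\rangle$ is not a semimartingale, so this step requires the regularisation the paper uses: one works with $S^{i,\ep}_t=\langle y^i\E^{-t\cdot},\lambda_0^i\rangle+\int_0^t \langle y^i \E^{-(t+\ep-s)\cdot},\nu^i\rangle\,\D X^i_s$, which is a semimartingale, applies It\^o there, and takes $\ep\downarrow 0$ before $t\downarrow 0$ to identify the generator. Second, for (iii) the paper does not run a sub-tangentiality argument on the boundary of $\cE_\star$: it obtains existence and uniqueness of $y_t$ by observing that \eqref{eq:ODEyt} is the adjoint equation to the deterministic system \eqref{eq:Equationeta} defining the spaces $\cE^n$, and then identifies $f_{y_t}(\lambda)$ with $\bE_{\lambda_0}[\exp(\langle y_0,\lambda_t\rangle)]$ through uniqueness of the Cauchy problem associated with the generalised Feller semigroup. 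Your cone-invariance route may well be workable, but it is additional machinery that the duality with \eqref{eq:Equationeta} renders unnecessary.
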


\textcolor{black}{Before we proceed with the proof of Proposition \ref{prop:GFP}, we would like to point out that even though we have calculated the generator $A f_y(\lambda) $ in Proposition \ref{prop:GFP} for Fourier basis elements, we have not calculated the generator for all elements of its domain of definition and very importantly we have not calculated its domain of definition. This is useful to keep in mind as without explicit knowledge of its domain of definition, it can be misleading to use the generator for drawing conclusions about limiting behaviour. As a matter of fact, in the case $K^i(t)=\frac{t^{\alpha(i)-1}}{\Gamma(\alpha(i))}\E^{-\delta t}$ with $V_{0}\neq 0$ and $\delta>0$, the calculation in (\ref{eq:ExpectationV_tinfinity}) shows that the limiting behaviour clearly depends on the initial condition $V_{0}\neq 0$.
}

\begin{proof}[Proof of Proposition \ref{prop:GFP}]
The entirety of~\cite[Section 4]{CT18} aims at proving~\cite[Theorem~4.17]{CT18}.
Our result follows from the same lines with a few modifications and remarks that we highlight here. The Lemmas, Assumptions, Remarks, Propositions and Theorems we refer to in this proof are all from~\cite{CT18}, unless stated otherwise.

\textbf{Modifications.} Since~$\E^{-t\cdot}\nu\in Y^\star$ for all~$t>0$ and
\[
\int_0^t \norm{\E^{-s\cdot}\nu}^2_{Y^\star} \ds = \int_0^t K(s)^2\ds<\infty,
\]
\cite[Assumption~4.5]{CT18} holds.
Moreover, \cite[Assumption~4.9]{CT18} is satisfied thanks to our assumption on~$K^i$ and~$R_i^w$ and Remark~4.10.

The space of signed measures is a vector space, hence a convex cone, and so are~$\cE^n$ for all $n\in\bN$. The weak-$\star$-continuity of the solution map and the bound of~$\varrho$ derived in Proposition~4.6 hold without modification. Essentially, every bound remains by first getting the bound for the autonomous one-dimensional square-root for~$i\in\llbracket 1,\tilde{d} \rrbracket$ and then plugging it in the other rows.

The invariant spaces~$\cE^n$ then satisfy all the necessary properties of \cite[Proposition~4.8 and Lemma~4.11]{CT18}  because they stay in their convex cone. This yields the generalised Feller property on~$\cB(\cE)$ of \cite[Theorem~4.13]{CT18}.

The variation of constants method continues to apply in multiple dimensions and for OU type processes, hence Proposition~4.14 still holds.

We recall that~$\cA^\star \lambda(\D x)=-x\lambda(\D x)$ for all~$\lambda\in\cM(\overline{\bR}_+,\bR^d)$.
In the proof of \cite[Theorem~4.17]{CT18}, for $i\in\llbracket1,\tilde{d}\rrbracket$, we replace \cite[Equation~(4.26)]{CT18} by
    \[
    \D\lambda^{i,n}_t = \cA^\star \lambda^{i,n}_t\dt + \nu \left(\beta_{ii} - n \sum_{j=1}^m \sigma_{iji}^2\right) \overline{\lambda}^{i,n}_t\dt + \nu \sum_{j=1}^m \D N^{ij,n}_t,
    \]
    where $N^{ij,n}_t$ is a jump process that jumps by~$1/n$ and with intensity~$n^2\sigma^2_{ij} \overline{\lambda}^{i,n}_t$. Similarly, for all~$i\in\llbracket \tilde{d}+1,d\rrbracket$,
    \[
    \D\lambda^{i,n}_t = \cA^\star \lambda^{i,n}_t \dt + \nu \left( \sum_{k=1}^d \beta_{ik} \overline{\lambda}^{k,n}_t\dt + \sum_{j=1}^m \sum_{k=1}^{\tilde{d}} \left(-n\left(\sigma_{ijk} \sqrt{\overline{\lambda}^{k,n}_s}+c_{ijk}\right)^2\dt + \D N^{ijk,n}_t  \right) \right),
    \]

    where $N^{ijk,n}_t$ is a jump process that jumps by~$1/n$ and with intensity~$n^2  \left(\sigma_{ijk}\sqrt{\overline{\lambda}^{k,n}_t}+c_{ijk}\right)^2$. The same arguments as in the proof of \cite[Theorem~4.17]{CT18} allow to conclude that the limit of~$\lambda^n$ as $n$ goes to infinity is a generalised Feller process which generator coincides with that of~\eqref{eq:multiSPDE} that we compute now.

\textbf{The generator.} Let us define, for any~$y\in \cE_\star$, $\lambda_0\in\cE$ and~$i\in\llbracket 1,d \rrbracket$,
\[
S_t^i:=\langle y^i,\lambda_t^i\rangle = \langle y^i\E^{-t\cdot} ,\lambda_0^i \rangle + \int_0^t \langle y^i\E^{-(t-s)\cdot} ,\nu^i\rangle \D X_s^i.
\]
This is not a semimartingale, therefore we introduce this approximation for any~$\ep>0$:
\[
S_t^{i,\ep}:= \langle y^i\E^{-t\cdot} ,\lambda_0^i \rangle + \int_0^t \langle y^i\E^{-(t+\ep-s)\cdot} ,\nu^i\rangle \D X_s^i.
\]
We define~$F(s):=\E^s$ such that~$F(S_t)=\E^{\langle y,\lambda_t \rangle}=\E^{\sum_i\langle y^i,\lambda_t^i \rangle}=f_{y}(\lambda_t)$ and by convergence in law $P_t f_{y}(\lambda_0) = \bE[F(S_t)]=\lim_{\ep\downarrow0} \bE[F(S^{\ep}_t)]$. By It\^{o}'s formula,
\[
F(S_t^{\ep})=F(S_0^{\ep}) + \sum_{i=1}^d \int_0^t \partial_i F(S_r^\ep) \D S_r^{i,\ep} + \half \sum_{i,j=1}^d \int_0^t \partial_{ij} F(S_r^\ep) \D[S^{i,\ep},S^{j,\ep}]_r,
\]
where, denoting~$\cA$ the adjoint of $\cA^\star$, we obtain
\begin{align*}
    \D S_r^{i,\ep} &= \left(\langle \cA y^i\E^{-r\cdot},\lambda_0^i \rangle + \langle y^i\E^{-\ep\cdot} ,\nu^i\rangle b_i(\olambda_r)
    + \int_0^r \langle \cA y^i \E^{(r+\ep-u)\cdot},\nu^i\rangle \D X_u^i \right)\dr
    + \langle y^i\E^{-\ep\cdot} ,\nu^i\rangle \sigma_i(\olambda_r)\,\D B_r.
\end{align*}
Hence, for all~$\lambda_0\in\cE$,
\begin{align*}
\bE[F(S_t^\ep)] = \E^{\langle y,\lambda_0 \rangle} + \int_0^t \bE\bigg[ &F(S^\ep_r) \sum_{i=1}^d\left( \langle \cA y^i\E^{-r\cdot},\lambda_0^i \rangle + \langle y^i\E^{-\ep\cdot} ,\nu^i\rangle b_i(\olambda_r)
    + \int_0^r \langle \cA y^i \E^{(r+\ep-u)\cdot},\nu^i\rangle b_i(\olambda_u) \du\right) \\
 +  &\frac{F(S^\ep_r)}{2} \sum_{i,j=1}^d \langle y^i\E^{-\ep\cdot} ,\nu^i\rangle  \langle y^j\E^{-\ep\cdot} ,\nu^j\rangle \sigma^\top_i\sigma_j(\olambda_r)
\bigg] \dr.
\end{align*}
Therefore,  we obtain \textcolor{black}{ for $\lambda_{0}$ the initial condition of the $\lambda$ process}
\begin{align*}
\lim_{t\downarrow0} \frac{P_t f_y(\lambda_0) - f_y(\lambda_{0})}{t}
&= \lim_{t\downarrow0} \frac{\lim_{\ep\downarrow0}\bE[F(S^\ep_t)] - f_y(\lambda_{0})}{t}\nonumber\\
&= f_y(\lambda_{0}) \sum_{i=1}^d\left( \langle \cA y^i ,\lambda^i_{0}\rangle + \langle y^i,\nu^i\rangle b_i(\olambda_{0})\right) +  \frac{f_y(\lambda_{0})}{2}\sum_{i,j=1}^d \left( \langle y^i,\nu^i\rangle \langle y^j,\nu^j\rangle \sigma^\top_i\sigma_j(\olambda_{0})\right) \nonumber\\
&= f_y(\lambda_{0}) \left( \langle \cA y,\lambda_{0}\rangle + \langle y b(\olambda_{0}),\nu\rangle
+ \half \langle y \sigma^\top(\olambda_{0}),\nu\rangle \langle y\sigma(\olambda_{0}),\nu\rangle \right),
\end{align*}
where the coefficients were given in~\eqref{eq:multicoefs}. This yields item (i), while item (ii) follows along the same lines as in \cite[Theorem~4.17]{CT18}.

By the existence of a generalised Feller semigroup, $\bE[\exp(\langle y_0,\lambda_t\rangle)]$ is the unique solution to the Cauchy problem
$$
\partial_t u(t,\lambda) = A u(t,\lambda), \qquad u(0,\lambda)=\exp(\langle y_0,\lambda\rangle).
$$
Moreover, for all $\lambda\in\cE$, there is a unique (mild) solution to
$$
\langle \partial_t y_t,\lambda\rangle = \mathcal{R}(y_t,\lambda), \quad y_0\in Y,
$$
as it is the adjoint equation to \eqref{eq:Equationeta}.
We have $f_{y_0}(\lambda) = \exp(\langle y_0,\lambda\rangle)$ and using the equation satisfied by $y_t$,
\[
    \partial_t f_{y_t}(\lambda) = f_{y_t}(\lambda)\langle \partial_t y_t,\lambda\rangle = f_{y_t}(\lambda) \mathcal{R}(y_t,\lambda) = A f_{y_t}(\lambda),
\]
    where the last equality follows from the definition of the generator in~\eqref{eq:mygen}.
Hence $f_{y_t}(\lambda)=\bE[\exp(\langle y_0,\lambda_t\rangle)]$.

The fourth point is a consequence of the second with~$y\equiv 1$.

    Finally, let~$y_0\equiv u\in\bR$, such that from (ii),
    \[
    \bE\left[\E^{u\olambda_t}\right]
    = \bE\left[\E^{\langle y_0,\lambda_t\rangle}\right]
    = \E^{\langle y_t,\lambda_0\rangle},
    \]
    where, in virtue of~\eqref{eq:ODEyt},
    \[
    \langle y_t,\lambda_0\rangle = u\langle \E^{-t\cdot},\lambda_0\rangle + \int_0^t \langle \E^{(t-s)\cdot},\lambda_0\rangle \left( \langle y_s b(\olambda_0),\nu\rangle
+ \half \langle y_s \sigma^\top(\olambda_0),\nu\rangle \langle y_s\sigma(\olambda_0),\nu\rangle \right)\ds.
    \]
    \notthis{which clearly indicates that $\psi_t=\langle y_t,\nu\rangle$ solves
    \[
    \psi_t = uK(t) + \int_0^t K(t-s)  \mathcal{R}(\psi_s)\ds.
    \]
    }
This concludes the proof of the proposition.
\end{proof}

\section{Verifying the condition for existence}\label{sec:bound}

We proved in the previous section that the solution of~\eqref{eq:multiSPDE} is indeed a generalised Feller process, hence all that is left to apply  Proposition~\ref{prop:existencecondition} is to show that the bound
\begin{equation}\label{eq:condition4}
\sup_{t\ge0} \bE_{\gamma}\left[\norm{\lambda_t}_{Y\star}\right] < \infty
\end{equation}
holds.
In this section, we specify the conditions on the parameters of the \textcolor{black}{stochastic evolution equation} that ensure~\eqref{eq:condition4}.
We start with some comments on the possible choices of initial condition~$\lambda_0$; recall that~$\lambda_0$ is admissible if it belongs to~$\cE$. By Assumption 5.2 and Remark~5.3 in~\cite{CT18}, $\E^{-r \cdot}\nu\in\cE$ for all $r>0$.
We can identify different conditions for initial conditions of OU rows or square-root rows.
Clearly, for~$i\in\llbracket \tilde{d}+1,d \rrbracket$, $\lambda_0^i$ can be any signed measure on~$\overline{\bR}$. For~$i\in\llbracket 1,\tilde{d} \rrbracket$, by Remark~5.4 in~\cite{CT18} and Example~2.2 in~\cite{JE18b}, are allowed all $\lambda_0\in Y^\star$ such that~$t\mapsto \int_0^\infty \E^{-tx} \lambda_0(\D x) \in \cG_K$, where~$\cG_K$ includes in particular H\"older continuous, non-decreasing functions~$g$ with~$g(0)\ge0$, and functions of the form~$g=V_0 + K \ast \theta$ such that~$\theta(s)\ds + V_0 L(\D s)$ is a non-negative measure, where~$V_0>0$ and~$L$ is the resolvent of first kind of~$K$.
This yields at least two options:
\begin{itemize}
    \item $\lambda_0(\D x)=V_0\, \delta_0(\D x)$, with~$V_0>0$;
    \item $\lambda_0(\D x) =V_0\, \delta_0(\D x)+  x^{-\alpha-\mu}\dx$ where~$0<\mu<1-\alpha$, such that
    \begin{align*}
    \int_0^\infty \E^{-xt}\lambda_0(\D x)
    & = V_0 +\Gamma(1-\alpha-\mu) t^{\alpha+\mu-1}\\
    & = V_0 + \frac{\Gamma(1-\alpha-\mu)\Gamma(\alpha+\mu)}{\Gamma(\alpha)\Gamma(\mu)} \int_0^t (t-s)^{\alpha-1} s^{\mu-1}\ds,
    \end{align*}
    which is equal to $V_0 + (K\ast\theta)(t)$ with $\theta$ and~$L$ non-negative.
\end{itemize}
We will be particularly interested in the case of constant initial condition because it allows us to compute $\bE[\olambda^k_t]$ explicitly, see~\eqref{eq:ExpectationV}, and because the bound~\eqref{eq:condition4} is independent of $\omega$.
This is the main result of this section.
\begin{proposition}\label{prop:bound}
Under Assumption~\ref{assu:main}, the bound \eqref{eq:condition4} holds.
\end{proposition}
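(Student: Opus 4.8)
The plan is to use the explicit mild representation of $\lambda_t$ as a measure and reduce \eqref{eq:condition4} to a uniform-in-time first-moment estimate on the autonomous square-root masses. Reading \eqref{eq:mildSPDE} component by component, $\lambda_t^i$ is the signed measure
\[
\lambda_t^i(\D x)=\E^{-tx}\lambda_0^i(\D x)+Z_t^i(x)\,\nu^i(\D x),\qquad Z_t^i(x):=\int_0^t\E^{-x(t-s)}\,\D X_s^i,
\]
i.e.\ a decayed copy of the initial datum plus a random $\nu^i$-density. By Assumption~\ref{assu:main}(a), $\lambda_0^i=V_0^i\delta_0$, so $\E^{-tx}\lambda_0^i(\D x)=V_0^i\delta_0(\D x)$ has mass $\abs{V_0^i}$ for every $t$. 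Pairing with $y$, $\norm y\le1$, and taking the supremum thus gives
\[
\norm{\lambda_t}_{Y^\star}\le\sum_{i=1}^d\Big(\abs{V_0^i}+\int_0^\infty\abs{Z_t^i(x)}\,\nu^i(\D x)\Big),
\]
and by Tonelli it is enough to show $\sup_{t\ge0}\int_0^\infty\bE\abs{Z_t^i(x)}\,\nu^i(\D x)<\infty$ for each $i$.

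I would then split $X^i$ into its drift and martingale parts, using the explicit expressions for $\D X^i$ displayed after \eqref{eq:mildSPDE}. The key structural observation, granted by Assumption~\ref{assu:main}(d)--(e), is that for \emph{every} row $i$ --- square-root or OU --- both the drift and the diffusion of $X^i$ are driven solely by the square-root masses $\olambda^k$ with $k\le\tilde d$. Hence the drift part of $Z_t^i$ obeys $\bE\abs{\cdot}\le\sum_{k\le\tilde d}\abs{\beta_{ik}}\int_0^t\E^{-x(t-s)}\bE[\olambda_s^k]\,\D s$, while It\^o's isometry (and Cauchy--Schwarz over the finitely many $k$) bounds the martingale part by $C_i\big(\int_0^t\E^{-2x(t-s)}\max_{k\le\tilde d}\bE[\olambda_s^k]\,\D s\big)^{1/2}$. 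Because the diffusion coefficient is $\sqrt{\olambda^k}$, only \emph{first} moments $\bE[\olambda_s^k]$ enter, so no second-moment control is needed.

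The crucial step is therefore the uniform bound $C:=\sup_{k\le\tilde d}\sup_{s\ge0}\bE[\olambda_s^k]<\infty$. For $k\le\tilde d$ the mass $\olambda^k$ is the autonomous square-root process \eqref{eq:rootcomponent}; taking expectations removes the martingale and leaves the linear Volterra equation $\bE[\olambda_t^k]=V_0^k-\beta_{kk}\int_0^t K^k(t-s)\bE[\olambda_s^k]\,\D s$, whose solution is exactly \eqref{eq:ExpectationV_tfixed}. By \eqref{eq:ExpectationV_tinfinity} this quantity is non-negative and bounded above by $V_0^k$ uniformly in $t$, the uniform bound resting on $\beta_{kk}>0$ and on the non-negativity of $K^k$ and of its resolvent guaranteed by Proposition~\ref{prop:GFP}. (The OU masses $\olambda^k$, $k>\tilde d$, are likewise bounded in mean by an $L^1(\bR_+)$-convolution argument using $\delta>0$, but, by the observation in the previous paragraph, this is not needed for \eqref{eq:condition4}.)

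The remaining and, I expect, most delicate step is the integration in $x$ against $\nu^i$. Combining $C<\infty$ with $\int_0^t\E^{-x(t-s)}\,\D s\le x^{-1}$ and $\int_0^t\E^{-2x(t-s)}\,\D s\le(2x)^{-1}$ yields $\bE\abs{Z_t^i(x)}\le C'\big(x^{-1}+x^{-1/2}\big)$ uniformly in $t$. Against $\nu^i(\D x)=\frac{1}{\Gamma(\alpha(i))\Gamma(1-\alpha(i))}(x-\delta)^{-\alpha(i)}\one_{x>\delta}\dx$ finiteness then holds for exactly the reasons dictated by Assumption~\ref{assu:main}(b): the exponential cutoff $\delta>0$ shifts $\mathrm{supp}(\nu^i)$ into $(\delta,\infty)$, so the singular factor $x^{-1/2}$ stays bounded there, while the integrable singularity $(x-\delta)^{-\alpha(i)}$ near $x=\delta$ is controlled by $\alpha(i)<1$; near infinity the binding term is the martingale contribution $x^{-1/2}(x-\delta)^{-\alpha(i)}$, integrable precisely because $\alpha(i)>\tfrac12$. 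Had $\delta=0$, the small-$x$ behaviour $x^{-1/2-\alpha(i)}$ would be non-integrable at the origin, which is consistent with the degeneracy noted in \eqref{eq:ExpectationV_tinfinity}. The endpoint $\alpha(i)=1$, where $\nu^i=\delta_\delta$ and $Z_t^i(\delta)$ is controlled directly, is an easier separate case. Assembling the three contributions gives $\sup_{t\ge0}\bE[\norm{\lambda_t}_{Y^\star}]<\infty$, i.e.\ \eqref{eq:condition4}.
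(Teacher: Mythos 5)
Your proof is correct, and although it shares the paper's skeleton --- the reduction of \eqref{eq:condition4} to $\sup_{t\ge0}\int_0^\infty\bE\big\lvert\int_0^t\E^{-x(t-s)}\D X_s^i\big\rvert\,\nu^i(\D x)<\infty$ (this is exactly Lemma~\ref{lemma:conditiondX}, which you re-derive more directly from the density representation $\lambda^i_t=\E^{-t\cdot}\lambda^i_0+Z^i_t\,\nu^i$, whereas the paper argues through upper/lower variation sets), the drift/martingale split with It\^o's isometry, and the first-moment formula \eqref{eq:ExpectationV} --- the way you close the estimate is genuinely different and more elementary. The paper integrates in $x$ \emph{first}: a Jensen step against the reweighted measure $\widehat\nu^i(\D x)=x^{-1/2}\nu^i(\D x)$ pulls the square root outside the $x$-integral, Fubini turns the $\nu^i$-integrals of exponentials back into kernels $(t-s)^{\mu-1}\E^{-2\delta(t-s)}$, and the resulting convolutions $\int_0^t\E^{-2\delta(t-s)}(t-s)^{\mu-1}\bE[\olambda^k_s]\ds$ are bounded uniformly in $t$ via incomplete Gamma functions, dominated convergence and the Mittag--Leffler integral identity. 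You instead freeze the time variable: the monotone bound $0\le\bE[\olambda^k_t]\le V_0^k$ (valid because the resolvent $s^{\alpha-1}\E^{-\delta s}\mathrm{E}_{\alpha,\alpha}(-\beta_{kk}s^\alpha)$ is non-negative with total integral at most one, by \eqref{eq:ExpectationV_tinfinity}), together with the crude estimates $\int_0^t\E^{-x(t-s)}\ds\le x^{-1}$ and $\int_0^t\E^{-2x(t-s)}\ds\le(2x)^{-1}$, gives a $t$-uniform pointwise bound $C'(x^{-1}+x^{-1/2})$, and the $x$-integration against $(x-\delta)^{-\alpha(i)}\one_{x>\delta}\dx$ converges precisely because $\delta>0$ kills the singularity at the origin and $\alpha(i)>\tfrac12$ handles the tail. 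This avoids the special-function analysis entirely; the price is that you discard the time decay of $\bE[\olambda^k_s]$ inside the convolution, which costs nothing under Assumption~\ref{assu:main} but is exactly the information the paper's route retains (and uses to exhibit the limiting constant $1-\beta_{kk}/(\beta_{kk}+\delta^\alpha)$ and to make transparent how the bound degenerates as $\delta\downarrow0$ --- a failure mode you correctly flag as well). One small attribution slip: the non-negativity of $K^i$ and of its resolvent is an \emph{assumption} of Proposition~\ref{prop:GFP}, not a conclusion; for the specific kernel of Assumption~\ref{assu:main}~b) it follows from complete monotonicity, or from \cite[Lemma~4.2]{ALP17}, which is what the paper invokes for \eqref{eq:ExpectationV}.
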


We need to state an intermediate lemma, the proof of which is postponed to the end of the section, before being in a position to prove Proposition~\ref{prop:bound}.

\begin{lemma}\label{lemma:conditiondX}
The bound~\eqref{eq:condition4} holds if~$\norm{\lambda_0}_{Y^\star}<\infty$ and, for all~$i\in\llbracket 1,d \rrbracket$,
\begin{equation}\label{eq:conditiondX}
    \sup_{t\ge0}  \bE\left[   \int_0^\infty  \abs{\int_0^t \E^{-x(t-s)} \D X_s^i} \nu^i(\D x) \right] < \infty.
\end{equation}
\end{lemma}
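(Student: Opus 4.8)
The plan is to bound the dual (total variation) norm $\norm{\lambda_t}_{Y^\star}=\sup_{y\in Y,\,\norm{y}\le1}\langle y,\lambda_t\rangle$ by testing the mild representation~\eqref{eq:mildSPDE} against an arbitrary $y$ in the unit ball of $Y$ and controlling the two contributions separately. Writing the pairing componentwise as $\langle y,\lambda_t\rangle=\sum_{i=1}^d\langle y^i,\lambda_t^i\rangle_i$ and pushing absolute values inside the pairings and the $\nu^i$-integrals, one obtains
\begin{equation*}
\abs{\langle y^i,\lambda_t^i\rangle}\le \abs{\langle \E^{-t\cdot}y^i,\lambda_0^i\rangle} + \int_0^\infty \abs{y^i(x)}\,\abs{\int_0^t \E^{-x(t-s)}\,\D X_s^i}\,\nu^i(\D x).
\end{equation*}
The decisive observation for the first term is that $\E^{-tx}\in[0,1]$ for $x\ge0$ and $t\ge0$, so that $\norm{\E^{-t\cdot}y^i}_\infty\le\norm{y^i}_\infty\le1$: the semigroup acts as a contraction and the initial-condition contribution is bounded by the total variation of $\lambda_0^i$, \emph{uniformly in $t$}.

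For the stochastic convolution term I use $\abs{y^i(x)}\le1$ to pull out the test function, producing a bound that no longer depends on $y$. Summing over $i\in\llbracket1,d\rrbracket$ and taking the supremum over the unit ball yields the pathwise estimate
\begin{equation*}
\norm{\lambda_t}_{Y^\star}\le C_d\left(\norm{\lambda_0}_{Y^\star} + \sum_{i=1}^d \int_0^\infty \abs{\int_0^t \E^{-x(t-s)}\,\D X_s^i}\,\nu^i(\D x)\right),
\end{equation*}
where $C_d$ absorbs the equivalence between $\sum_i\norm{\cdot}$ and $\norm{\cdot}_{Y^\star}$ on the finite product. Taking expectations, using subadditivity of the supremum over the finite sum, and then invoking the hypothesis~\eqref{eq:conditiondX} together with $\norm{\lambda_0}_{Y^\star}<\infty$ gives
\begin{equation*}
\sup_{t\ge0}\bE\big[\norm{\lambda_t}_{Y^\star}\big]\le C_d\norm{\lambda_0}_{Y^\star} + C_d\sum_{i=1}^d \sup_{t\ge0}\bE\left[\int_0^\infty \abs{\int_0^t \E^{-x(t-s)}\,\D X_s^i}\,\nu^i(\D x)\right]<\infty,
\end{equation*}
which is exactly~\eqref{eq:condition4}.

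The steps above are elementary once the mild form is available, so the genuine content of the lemma is the reduction it performs: all the analytic difficulty is shifted onto verifying~\eqref{eq:conditiondX} for the specific dynamics of $X^i$ in~\eqref{eq:XProcess}, which is deferred to Proposition~\ref{prop:bound}. Within the proof of the lemma itself, the only points requiring care are (a) measurability of $\omega\mapsto\norm{\lambda_t}_{Y^\star}$, so that the expectation is well defined; this is guaranteed because $Y=\cC_b(\overline{\bR}_+,\bR^d)$ is separable (the compactification of $\bR_+$ is used precisely for this), so the supremum defining the norm may be restricted to a countable dense subset of the unit ball; and (b) the legitimacy of interchanging $\sup_y$ with the right-hand side, which is immediate because the dominating bound is deterministic in $y$. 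I therefore expect no real obstacle beyond the bookkeeping of the dimensional constant $C_d$, the genuine work lying entirely in the subsequent verification of~\eqref{eq:conditiondX}.
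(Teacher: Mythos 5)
Your proof is correct, and it reaches the same two final estimates as the paper (initial term controlled by $\norm{\lambda_0}_{Y^\star}$, stochastic convolution term controlled by $\sum_{i}\sup_{t\ge0}\bE\big[\int_0^\infty\abs{\int_0^t\E^{-x(t-s)}\,\D X_s^i}\,\nu^i(\D x)\big]$), but it gets there by a genuinely different representation of the norm. The paper works measure-theoretically: it invokes the upper/lower variation of the signed measure $\lambda_t$, chooses increasing sequences of Borel sets $(U_n)$ and $(L_n)$ with $\norm{\lambda_t}_{Y^\star}=\lim_n\big(\lambda_t(U_n)-\lambda_t(L_n)\big)$, applies monotone convergence to interchange limit and expectation (precisely because it does not want to presuppose integrability of $\norm{\lambda_t}_{Y^\star}$), and then evaluates the mild formulation against the indicator-type test functions $\one_{U_n}-\one_{L_n}$. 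You instead use the dual-ball characterization $\norm{\lambda_t}_{Y^\star}=\sup_{\norm{y}\le1}\langle y,\lambda_t\rangle$, get a pathwise bound uniform in $y$, and handle measurability via separability of $Y=\cC_b(\overline{\bR}_+,\bR^d)$ by restricting the supremum to a countable dense subset of the unit ball. Each route has a small technical advantage over the other: yours stays entirely within continuous test functions, for which the mild identity~\eqref{eq:mildSPDE} is literally stated, whereas the paper's evaluation at indicators implicitly requires extending that identity to bounded measurable test functions; conversely, the paper's monotone-convergence argument sidesteps any density/continuity considerations and any dimensional constant $C_d$. One point to keep clean in your version: the mild identity holds almost surely for each fixed $y$, so you should first fix the countable dense subset, discard the corresponding countable union of null sets, and only then take the supremum — which is exactly what your separability remark accomplishes, since the right-hand side of your pathwise bound is independent of $y$ and $\lambda_t\in Y^\star$ has finite total variation pathwise, making $y\mapsto\langle y,\lambda_t\rangle$ sup-norm continuous.
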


\begin{remark}\label{rem:MarkovCase}
In the case where $\nu=\delta_0$ there is no kernel, hence this expectation boils down to~$\bE[\abs{X_t^i}]$ where~$X^i$ is a Markovian process.
For example, Condition \eqref{eq:conditiondX} holds if~$X$ is a multidimensional OU process, which is Gaussian with bounded variance, and if~$X$ is a one-dimensional square-root process, which is positive with bounded expectation.
\end{remark}

\begin{proof}[Proof of Proposition~\ref{prop:bound}]
First notice that~$\norm{\lambda_0}_{Y^\star}=V_0<\infty$.
For all~$i>\tilde{d}$, since $c_{ijk}=0$ and~$\beta_{ik}=0$ for~$k>\tilde{d}$,
\begin{align}
&\bE\left[  \int_0^\infty  \abs{\int_0^t \E^{-x(t-s)} \D X_s^i} \nu^i(\D x) \right] \nonumber \\
& \le \bE\left[  \sum_{k=1}^{\tilde{d}}\int_0^\infty\left(\int_0^t\E^{-x(t-s)} \beta_{ik}\olambda^k_s \ds \right)\nu^i(\D x) \right]
    +  \int_0^\infty \bE\left[ \left|\int_0^t\E^{-x(t-s)} \sum_{j=1}^m \sum_{k=1}^{\tilde{d}} \sigma_{ijk} \sqrt{\olambda^k_s}  \,\D B^j_s \right|\right]\nu^i(\D x) \label{eq:split}\\
&\le \sum_{k=1}^{\tilde{d}} \bE\left[\int_0^\infty\left(\int_0^t\E^{-x(t-s)} \beta_{ik}\olambda^k_s \ds \right)\nu^i(\D x) \right] + \int_0^\infty \bE\left[ \int_0^t\E^{-2x(t-s)} dm \sum_{j=1}^m \sum_{k=1}^{\tilde{d}} \sigma_{ijk}^2 \olambda^k_s \ds \right]^\half \nu^i(\D x), \nonumber
 \end{align}
by It\^o's isometry.
By Lemma~\ref{lemma:conditiondX} it suffices to prove that the latter is uniformly bounded in~$t\ge0$.
Recall that if~$k\le\tilde{d}$ then $\lambda^k$ is one-dimensional and, by~\cite[Lemma~4.2]{ALP17},
\begin{equation}\label{eq:ExpectationV}
    \bE[\olambda^k_t]= V_0^k\left( 1-  \beta_{kk}
    \int_{0}^{t}\E^{-\delta s}s^{\alpha-1} \mathrm{E}_{\alpha,\alpha}(-\beta_{kk} s^\alpha)\D s\right).
\end{equation}

Let us define $\widehat\nu^i(\D x):=x^{-\half}\nu^i(\D x)$. For any~$f:\bR_+\to\bR_+$, we can split the following integral and apply Jensen's inequality to both terms
\begin{align*}
    \int_0^\infty \sqrt{f(x)} \nu^i(\D x)
    &= \int_0^1 \sqrt{f(x)} \nu^i(\D x) + \int_1^\infty \sqrt{f(x) x}\, \widehat\nu^i(\D x) \nonumber\\
    &\le \sqrt{\nu^i((0,1))} \sqrt{\int_0^1 f(x)\, \nu^{i}(\D x)} + \sqrt{\widehat\nu^i((1,\infty))} \sqrt{\int_1^\infty f(x) x\, \widehat\nu^i(\D x)} \nonumber\\
    &\le \frac{\Gamma(1-\alpha(i))^{-1}}{1-\alpha(i)}  \sqrt{\int_0^\infty f(x) \nu^i(\D x)} + \frac{\Gamma(1-\alpha(i))^{-1}}{\alpha(i)-\half} \sqrt{\int_0^\infty f(x) x \,\widehat\nu^i(\D x)}.
\label{eq:splitzeroinf}
\end{align*}
We then set $f(x):=\bE\left[ \int_0^t \E^{-2x(t-s)} \sigma^2_{ijk} \olambda^k_s\ds \right]$.
Both terms lead to the same type of kernel, after an application of Fubini's theorem
\begin{align}
    \int_0^\infty \E^{-2x(t-s)} \nu^i(\D x) &= \int_0^\infty \E^{-2x(t-s)} \frac{1}{\Gamma(\alpha(i))\Gamma(1-\alpha(i))}(x-\delta)^{-\alpha(i)} \one_{x>\delta}\D x\nonumber\\
    &= \frac{1}{\Gamma(\alpha(i))\Gamma(1-\alpha(i))}\int_{\delta}^\infty \E^{-2x(t-s)} (x-\delta)^{-\alpha(i)} \D x\nonumber\\
    &= \frac{1}{\Gamma(\alpha(i))\Gamma(1-\alpha(i))}\E^{-2\delta(t-s)}\int_{0}^\infty \E^{-2y(t-s)} y^{-\alpha(i)} \D y\nonumber\\
    &= \frac{1}{\Gamma(\alpha(i))\Gamma(1-\alpha(i))}2^{\alpha(i)-1}(t-s)^{\alpha(i)-1}\E^{-2\delta(t-s)}\int_{0}^\infty \E^{-z} z^{-\alpha(i)} \D z\nonumber\\
    &=\frac{2^{\alpha(i)-1}}{\Gamma(\alpha(i))} (t-s)^{\alpha(i)-1}\E^{-2\delta(t-s)}.
    \end{align}
The same exact calculation yields
\begin{align}
    \int_0^\infty \E^{-2x(t-s)} x \, \widehat\nu^i(\D x) &= \int_0^\infty \E^{-2x(t-s)} \frac{1}{\Gamma(\alpha(i))\Gamma(1-\alpha(i))} (x-\delta)^{\half-\alpha(i)}\one_{x>\delta}  \D x\nonumber\\
    &= \frac{2^{\alpha(i)-\frac{3}{2}} \Gamma(\alpha(i)-\frac{3}{2})}{\Gamma(\alpha(i))} (t-s)^{\alpha(i)-\frac{3}{2}}\E^{-2\delta(t-s)}.\nonumber
\end{align}

We are left to consider integrals of the type
$\int_0^t \E^{-2\delta(t-s)} (t-s)^{\mu-1} \bE\left[\olambda_s^k\right]\ds$,
for~$\mu\in(0,1]$. We want to show that such quantities are uniformly bounded over $t\in[0,\infty)$.

For this purpose, we have via a change of the domain of integration
\begin{align}
&\int_0^t \E^{-2\delta(t-s)} (t-s)^{\mu-1} \bE\left[\olambda_s^k\right]\ds&=\nonumber\\
&=
V_0^k\left( \int_0^t \E^{-2\delta(t-s)} (t-s)^{\mu-1} \ds-  \beta_{kk} \int_0^t \E^{-2\delta(t-s)} (t-s)^{\mu-1}
    \int_{0}^{s}\E^{-\delta u}u^{\alpha-1} \mathrm{E}_{\alpha,\alpha}(-\beta_{kk} u^\alpha)\D u \D s\right)  \nonumber\\
&=
V_0^k\left( \int_0^t \E^{-2\delta(t-s)} (t-s)^{\mu-1} \ds-  \beta_{kk} \int_0^t\left(\int_{u}^{t} \E^{-2\delta(t-s)} (t-s)^{\mu-1}
    \E^{-\delta u}u^{\alpha-1} \mathrm{E}_{\alpha,\alpha}(-\beta_{kk} u^\alpha)\D s\right)\D u\right)  \nonumber\\
&=
V_0^k\left( \int_0^t \E^{-2\delta(t-s)} (t-s)^{\mu-1} \ds-  \beta_{kk} \int_0^t\left(\int_{u}^{t} \E^{-2\delta(t-s)} (t-s)^{\mu-1}
    \D s\right)\E^{-\delta u}u^{\alpha-1} \mathrm{E}_{\alpha,\alpha}(-\beta_{kk} u^\alpha)\D u\right)  \nonumber\\
 &=
V_0^k\left( \int_0^t \E^{-2\delta(t-s)} (t-s)^{\mu-1} \ds-  \beta_{kk} \int_0^t\left((2\delta)^{-\mu}\int_{0}^{2\delta(t-u)} \E^{-\kappa} \kappa^{\mu-1}
    \D \kappa\right)\E^{-\delta u}u^{\alpha-1} \mathrm{E}_{\alpha,\alpha}(-\beta_{kk} u^\alpha)\D u\right)  \nonumber\\
&=
V_0^k\left( \int_0^t \E^{-2\delta(t-s)} (t-s)^{\mu-1} \ds-  \beta_{kk} \int_0^t\left((2\delta)^{-\mu}\gamma(\mu,2\delta(t-u))\right)\E^{-\delta u}u^{\alpha-1} \mathrm{E}_{\alpha,\alpha}(-\beta_{kk} u^\alpha)\D u\right)  \nonumber\\
&=
V_0^k\left( (2\delta)^{-\mu}\gamma(\mu,2\delta t)-  \beta_{kk} \int_0^t\left((2\delta)^{-\mu}\gamma(\mu,2\delta(t-u))\right)\E^{-\delta u}u^{\alpha-1} \mathrm{E}_{\alpha,\alpha}(-\beta_{kk} u^\alpha)\D u\right)  \nonumber\\
&=
V_0^k\left( (2\delta)^{-\mu}\gamma(\mu,2\delta t)-  \beta_{kk} \int_0^{\infty}\one_{[0,t]}(u)\left((2\delta)^{-\mu}\gamma(\mu,2\delta(t-u))\right)\E^{-\delta u}u^{\alpha-1} \mathrm{E}_{\alpha,\alpha}(-\beta_{kk} u^\alpha)\D u\right)  \nonumber
    \end{align}
where $\gamma(\cdot,\cdot)$ denotes the incomplete Gamma function.

Now the quantity above is a  continuous functions in both $t\in[0,\infty)$ and $\delta\in(0,\infty)$. Next, we want to pass to the limit as $t\rightarrow\infty$ in the last expression.

For the first term we know that $\lim_{t\rightarrow\infty}\gamma(\mu,2\delta t)=\Gamma(\mu)$. For the second term we shall use dominated convergence. Define the function $u\mapsto\xi_{t}(u)$
\begin{align*}
\xi_{t}(u)&=\one_{[0,t]}(u)\left((2\delta)^{-\mu}\gamma(\mu,2\delta(t-u))\right)\E^{-\delta u}u^{\alpha-1} \mathrm{E}_{\alpha,\alpha}(-\beta_{kk} u^\alpha)
\end{align*}
 and notice that
\begin{align*}
|\xi_{t}(u)|&\leq \left((2\delta)^{-\mu}\Gamma(\mu)\right)\E^{-\delta u}u^{\alpha-1} \mathrm{E}_{\alpha,\alpha}(-\beta_{kk} u^\alpha)\in L^{1}([0,\infty)).
\end{align*}

Hence, by dominated convergence, we have that
 as $t\rightarrow\infty$ the quantity above  limits to, for $\delta$ fixed,
\begin{align}
&V_0^k\left( (2\delta)^{-\mu}\Gamma(\mu)-  \beta_{kk} \int_0^\infty\left((2\delta)^{-\mu}\Gamma(\mu)\right)\E^{-\delta u}u^{\alpha-1} \mathrm{E}_{\alpha,\alpha}(-\beta_{kk} u^\alpha)\D u\right) \nonumber\\
&=V_0^k (2\delta)^{-\mu}\Gamma(\mu)\left( 1-  \beta_{kk} \int_0^\infty \E^{-\delta u}u^{\alpha-1} \mathrm{E}_{\alpha,\alpha}(-\beta_{kk} u^\alpha)\D u\right) \nonumber\\
&=V_0^k (2\delta)^{-\mu}\Gamma(\mu)\left( 1-  \frac{\beta_{kk}}{\beta_{kk}+\delta^{\alpha}} \right) \nonumber\\
&<\infty,
\end{align}
the last identity following by the integral representation properties of the Mittag-Leffler function, see for example \cite[Section~7]{Haubold2011}.
Hence, we indeed get the uniform finiteness in $t\in[0,\infty)$ for $i>\tilde{d}$.

On the other hand, if~$i<\tilde{d}$,
\begin{align*}
\bE\left[  \int_0^\infty  \abs{\int_0^t \E^{-x(t-s)} \D X_s^i} \nu^i(\D x) \right]
\le \, & \bE\left[ \int_0^\infty\left(\int_0^t\E^{-x(t-s)} \beta_{ii}\olambda^i_s \ds \right)\nu^i(\D x) \right]\\
&    +  \int_0^\infty \bE\left[ \Big\lvert\int_0^t\E^{-x(t-s)} \sum_{j=1}^m \sigma_{iji} \sqrt{\olambda^i_s}  \,\D B^j_s \Big\lvert\right]\nu^i(\D x),
\end{align*}
which boils down to~\eqref{eq:split}.

\end{proof}

\begin{remark}\label{rem:Bxt}
The case~$c_{ijk}>0$ is unlikely to go through by splitting the integrals as we did here.
Consider~\eqref{eq:conditiondX} where~$X$ is a one-dimensional OU process and split the Lebesgue and It\^o integrals as in~\eqref{eq:split}. The stochastic integral then reads
\begin{equation}
    \int_0^\infty \bE[\abs{B^x_t}]  x^{-\alpha}\dx,
\label{eq:Bxt}
\end{equation}
where~$B^x_t := \int_0^t \E^{-x(t-s)} \,\D W_s$. Since~$\displaystyle\bE[\abs{B^x_t}] = \sqrt{\frac{1-\E^{-2xt}}{x\pi}}$, this entails that~\eqref{eq:Bxt} grows as~$t^{\alpha-\half}$, and thus is not uniformly bounded.
\end{remark}

We conclude with the proof of Lemmas~\ref{lemma:conditiondX}.
\begin{proof}[Proof of Lemma~\ref{lemma:conditiondX}]
Let $\mathfrak B(\overline\bR_+)$ denote the Borel subsets of $\overline\bR_+$.
For a signed measure~$\lambda$ on~$\overline\bR_+$, define the upper variation~$\mathfrak U$ (resp. lower variation~$\mathfrak L$) as~$\mathfrak U(\lambda):=\sup\{\lambda(A):A\in\mathfrak B(\overline\bR_+)\}$ (resp.~$\inf$), and such that the total variation corresponds to~$\norm{\lambda}_{Y^\star}=\mathfrak U(\lambda)-\mathfrak L(\lambda)$.

Let us fix~$t>0$, then there exist two increasing sequences of sets $(U_n)_{n\ge1}$ and~$(L_n)_{n\ge1}$, both in~$\mathfrak B(\overline\bR_+)$, such that~$\lambda_t(U_n)$ is non-negative for all~$n\in\bN$ and increases towards~$\mathfrak U(\lambda_t)$ as~$n$ goes to $+\infty$, and analogously,  $\lambda_t(L_n)$ is non-positive and decreases to~$\mathfrak L(\lambda_t)$.
We will use the representation
\begin{equation}\label{eq:rpznormlambda}
\norm{\lambda_t}_{Y^\star}
=\lim_{n\uparrow\infty} \big(\lambda_t(U_n)-\lambda_t(L_n)\big)
=\lim_{n\uparrow\infty} \left( \int_{U_n}\lambda_t(\D x) - \int_{L_n} \lambda_t(\D x)\right).
\end{equation}
Intuitively, one should think of the limit of~$U_n$ (resp. $L_n$) as the subset of $\overline\bR_+$ on which~$\lambda_t$ is positive (resp. negative). This way, the function maximising $\sup_{\norminf{y}\le 1} \langle y,\lambda_t\rangle$ (if it exists) corresponds to the limit of $\one_{U_n}-\one_{L_n}$.

We did not assume that $\norm{\lambda_t}_{Y^\star}$ should be finite.
Yet, we can apply the monotone convergence theorem on the representation~\eqref{eq:rpznormlambda}
\begin{align}
    \bE[\norm{\lambda_t}_{Y^\star}]
    &= \lim_{n\uparrow+\infty} \bE\left[\int_{U_n}\lambda_t(\D x) - \int_{L_n} \lambda_t(\D x)\right] \nonumber\\
    & = \lim_{n\uparrow+\infty} \bE\left[ \int_0^\infty (\one_{U_n}-\one_{L_n}) \left( \E^{-xt}\lambda_0(\D x) + \int_0^t \E^{-x(t-s)} \,\D X_s \,\nu(\D x) \right) \right].
\label{eq:totalvar}
\end{align}
Since~$\norm{\lambda_0}_{Y^\star}<\infty$ by assumption, the first term is finite.
Starting from~\eqref{eq:totalvar}, we consider the worst possible sets~$U_n$ and~$L_n$, recall that~$\nu$ is diagonal and the form of~$X$:
\begin{align*}
     & \bE\left[ \int_0^\infty  (\one_{U_n}-\one_{L_n})(x) \left(\int_0^t \E^{-x(t-s)} \D X_s\right) \nu(\D x) \right]\\
    &= \bE\left[ \sum_{i=1}^d  \int_0^\infty  (\one_{U_n}-\one_{L_n})(x) \left(\int_0^t \E^{-x(t-s)} \D X_s^i\right) \nu^i(\D x) \right]\nonumber\\
    &\le \sum_{i=1}^d \bE\left[   \int_0^\infty  \abs{\int_0^t \E^{-x(t-s)} \D X_s^i} \nu^i(\D x) \right], 
\end{align*}
which yields the claim.
\end{proof}

\section{Outlook}\label{S:Outlook}
It is of interest to extend the results to more general Volterra processes. By shifting the generator of the lift it is possible to add a drift in the coefficient, for instance in the one-dimensional case~$b(x)=\beta(\theta-x)$, and prove that this still produces a generalised Feller process. We have investigated the validity of the Condition~\eqref{eq:conditionrho} in this case, but it is not clear at this point how to complete the proof.

Building on the generalised Feller property, we also aim at showing the uniqueness of the invariant measure in the appropriate space, excluding the zero process. The analogue of the strong Feller property is granted in our setting since~$\cB^\varrho(\bm X)$ already includes the space of bounded functions; uniqueness shall follow if~$(\lambda_t)_{t\ge0}$ satisfies a sort of recurrence property. One can also hope to characterise the unique invariant measure using tools from Malliavin calculus or the form of the Laplace transform. Once ergodicity is established, it is interesting to identify the rate of convergence and how it varies with~$\alpha$, thereby linking roughness of the volatility and convergence to the stationary measure.

\section*{Acknowledgement}
The authors would like to thank the reviewer for a very careful review of the manuscript that identified an issue with the initial version of the result and for allowing us to address it.

\section*{Funding}
A.P. and A.J. are
supported by an EPSRC EP/T032146/1 grant.
K.S. is partially supported by the National Science Foundation (DMS 2107856) and Simons Foundation Award  672441.

\bibliographystyle{abbrv}
\bibliography{bib}

\end{document}